\crefname{equation}{}{}
\newtheorem{theorem}{Theorem}[section]
\newtheorem{lemma}[theorem]{Lemma}
\newtheorem{corollary}[theorem]{Corollary}
\newtheorem*{conjecture*}{Conjecture}
\theoremstyle{definition}
\theoremstyle{remark}
\newtheorem*{remark}{Remark}
\newtheorem*{example}{Example}
\numberwithin{equation}{section}
\newcommand{\fm}{\mathfrak m}
\newcommand{\SL}{\mathrm{SL}}
\newcommand{\Q}{\mathbb Q}
\newcommand{\Z}{\mathbb Z}
\newcommand{\ord}{\mathrm ord}
\title[MacMahon's sums-of-divisors and allied $q$-series]{MacMahon's sums-of-divisors and allied $q$-series}
\date{\today}
\thanks{2020 {\it{Mathematics Subject Classification.}} 11F03, 11A25, 11F50, 11M41, 11F11, 11F33}
\keywords{divisor functions, partitions, quasimodular forms}
\author{Tewodros Amdeberhan, Ken Ono \and Ajit Singh}
\address{Dept. of Mathematics, Tulane University, New Orleans, LA 70118}
\email{tamdeber@tulane.edu}
\address{Dept. of Mathematics, University of Virginia, Charlottesville, VA 22904}
\email{ko5wk@virginia.edu}
\email{ajit18@iitg.ac.in}
\begin{document}
\begin{abstract} Here we investigate the $q$-series
\begin{displaymath}
\begin{split}
\mathcal{U}_a(q)=\sum_{n=0}^{\infty} MO(a;n)q^n&:=\sum_{0< k_1<k_2<\cdots<k_a} \frac{q^{k_1+k_2+\cdots+k_a}}{(1-q^{k_1})^2(1-q^{k_2})^2\cdots(1-q^{k_a})^2},\\
\mathcal{U}_a^{\star}(q)=\sum_{n=0}^{\infty}M(a;n)q^n&:=\sum_{1\leq k_1\leq k_2\leq\cdots\leq k_a} \frac{q^{k_1+k_2+\cdots+k_a}}{(1-q^{k_1})^2(1-q^{k_2})^2\cdots(1-q^{k_a})^2}.
\end{split}
\end{displaymath}
MacMahon introduced the $\mathcal{U}_a(q)$ in his seminal work on partitions and divisor functions.  Recent works show that these series are
sums of quasimodular forms with weights $\leq 2a.$ We make this explicit by describing them in terms of Eisenstein series. We use these formulas to obtain explicit and general congruences for the coefficients $MO(a;n)$ and $M(a;n).$ Notably, we prove the conjecture of Amdeberhan-Andrews-Tauraso as the  $m=0$ special case of the infinite family of congruences 
$$
MO(11m+10; 11n+7)\equiv 0\pmod{11},
$$
and we prove that
$$
MO(17m+16; 17n+15)\equiv 0\pmod{17}.
$$
We obtain further formulae using the limiting behavior of these series.
For $n\leq a+\binom{a+1}2,$ we obtain a ``hook length'' formulae for $MO(a;n)$, and for $n\leq 2a$, we find that $M(a;n)=\binom{a+n-1}{n-a}+\binom{a+n-2}{n-a-1}.$  
\end{abstract}

\maketitle
\section{Introduction and Statement of Results}

At first glance, one might underestimate the value of the trivial observation that the number of partitions of an integer $n$ into identical parts  is also the number of divisors of $n$. This fact is a glimpse of a rich theory that relates integer partitions and divisor functions. 
Indeed, MacMahon's important paper \cite{MacMahon} is based on the idea of connecting partitions to divisor sums: partition 
of $n$ using $k_1$ repeated $s_1$ times, and $k_2$ repeated $s_2$ times, and so on through $k_a$ repeated $s_a$ times. Using this convention, he considered the sum of products of the \emph{multiplicities} $MO(a;n):=\sum s_1s_2\cdots s_a$ of size $n$ partitions, which has the generating function
\begin{align} \label{Mac1} 
\mathcal{U}_a(q):=\sum_{n\geq0}MO(a;n)\,q^n=\sum_{0< k_1<k_2<\cdots<k_a} \frac{q^{k_1+k_2+\cdots+k_a}}{(1-q^{k_1})^2(1-q^{k_2})^2\cdots(1-q^{k_a})^2}.
\end{align}
His work  \cite{MacMahon} is populated with beautiful divisor function identities, where $\sigma_{\nu}(n):=\sum_{d\mid n}d^{\nu},$  such as:
\begin{equation}\label{EisensteinIdentities}
\mathcal{U}_1(q)=\sum_{n\geq1}\sigma_1(n)q^n \ \ \ \ {\text {\rm and}}\ \ \ \ 
\mathcal{U}_2(q)=\sum_{n\geq1} \left(\frac{\sigma_1(n)}{8}-\frac{n\sigma_1(n)}{4}+\frac{\sigma_3(n)}{8}\right)q^n.
\end{equation}

To entice the reader, we offer the first few terms of $\mathcal{U}_1(q),\dots, \mathcal{U}_4(q)$:
\begin{displaymath}
\begin{split}
\mathcal{U}_1(q)&=q+3q^2+4q^3+7q^4+6q^5+12q^6+8q^7+\dots, \\
\mathcal{U}_2(q)&=q^3+3q^4+9q^5+15q^6+30q^7+45q^8+67q^9+\dots, \\
\mathcal{U}_3(q)&=q^6+3q^7+9q^8+22q^9+42q^{10}+81q^{11}+140q^{12}+\dots, \\
\mathcal{U}_4(q)&=q^{10}+3q^{11}+9q^{12}+22q^{13}+51q^{14}+97q^{15}+188q^{16}+\dots. \\
\end{split}
\end{displaymath}
The inequalities in definition (\ref{Mac1}) imply that
$q^{-\frac{a(a+1)}{2}}\cdot \mathcal{U}_a(q)=1+3q+\dots,$ while for
$a\geq 2,$ we have
$$
q^{-\frac{a(a+1)}{2}}\cdot \mathcal{U}_a(q)=1+3q+9q^2+\dots.
$$
Answering the natural question, we show that this sequence converges to a simple infinite product, which, by the  theory of Nekrasov-Okounkov \cite{NekOk},  gives  {\it hook length} formulae for many of the $MO(a;n).$ 

To make this precise, recall that a partition $\lambda=(\lambda_1,\dots,\lambda_{\ell})$ of $n$, denoted $\lambda\vdash n$, is a non-increasing sequence of positive integers that sum to $n$. Its {\it Young diagram} is the left-justified array of boxes where the row lengths are the parts. The {\it hook} $H(i,j)$ of the box in position $(i,j)$ consists of this box, together with those below it and those to its right.  Its {\it hook length} $h(i,j):=(\lambda_i-i)+(\lambda_j^{\prime}-j)+1$ is the number of such boxes, where $\lambda_j^{\prime}$  is the number of boxes in column $j$. Denote the multiset of hook lengths of $\lambda$ by $\mathcal{H}(\lambda)$.  Finally, we recall the ``exponential form'' of a partition $\lambda=(1^{m_1},2^{m_2},\dots,t^{m_t}),$  where  $m_i$ is the multiplicity of part $i.$   

\begin{example}
The exponential form of $\lambda=(4,4,2)$  is $\lambda=(1^0,2^1,3^0,4^2,5^0,6^0,7^0,8^0,9^0,10^0)\vdash 10$.
 Its Young diagram is given below, and shows that $\mathcal{H}(\lambda)=\{6,5,5, 4, 3, 2, 2, 2, 1, 1\}$.

$$\ytableausetup{centertableaux}
\begin{ytableau}
6 & 5& 3 &2 \\
5&4&2&1\\
2&1
\end{ytableau}
$$
\end{example}

We derive the following result using the work of Andrews-Rose \cite{Andrews-Rose} and Nekrasov-Okounkov \cite{NekOk}.

\begin{theorem}\label{UaLimit} The following are true:

\noindent
(i) If $a$ is a positive integer, then we have that
$$q^{-\frac{a(a+1)}{2}}\cdot \mathcal{U}_a(q)=\prod_{n\geq1}\frac{1}{(1-q^n)^3}+O(q^{a+1}).
$$

\noindent
(ii) If $n\leq a+\binom{a+1}2$, then we have that
$$
MO(a;n)=\sum_{\lambda\vdash n-a}\, \prod_{h\in\mathcal{H}(\lambda)}\left(\frac{2}{h^2}+1\right)=\sum_{\lambda\vdash n-a}\prod_{s=1}^{n-a}\binom{2+m_s}2.
$$
\end{theorem}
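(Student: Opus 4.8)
The plan is to deduce (ii) from (i) by extracting a single coefficient, so the substance lies in (i); I would prove (i) directly from MacMahon's combinatorial reading of $MO(a;n)$ via an elementary estimate that forces the cube $\prod(1-q^n)^{-3}$ to appear, and invoke Nekrasov--Okounkov only at the very end to rewrite that product as a hook-length sum.

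For (i), recall that $MO(a;n)$ counts the partitions of $n$ having exactly $a$ distinct part sizes $k_1<\cdots<k_a$, each weighted by the product $s_1\cdots s_a$ of its multiplicities. Writing $k_i=i+\ell_i$ (the constraints $k_i\ge i$ and $k_{i+1}>k_i$ are exactly $0\le\ell_1\le\cdots\le\ell_a$) and $s_i=1+t_i$ with $t_i\ge0$, and using $n-\binom{a+1}{2}=\sum_i[\ell_i+t_i(i+\ell_i)]$, the normalized series becomes
\[
q^{-\binom{a+1}{2}}\,\mathcal{U}_a(q)=\sum_{\substack{0\le \ell_1\le\cdots\le\ell_a\\ t_1,\dots,t_a\ge 0}}\Bigl(\prod_{i=1}^a(1+t_i)\Bigr)\,q^{\sum_{i}\ell_i+\sum_{i}t_i(i+\ell_i)}.
\]

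The key step is a ``no-collision'' estimate: every monomial of degree $m\le a$ comes from a configuration in which no index $i$ has both $t_i\ge1$ and $\ell_i\ge1$. Indeed, such an index would contribute $\ell_i+t_i(i+\ell_i)\ge i+2$ by itself, while monotonicity forces $\ell_j\ge\ell_i\ge1$ for every $j>i$, adding at least $a-i$ more, so the total degree would be at least $a+2>a$ (this also covers $i=a$). Consequently, for $m\le a$ the supports of $(t_i)$ and $(\ell_i)$ are disjoint, the cross term $\sum_i t_i\ell_i$ vanishes, and the exponent collapses to $\sum_i\ell_i+\sum_i t_i\,i$. The nonzero $\ell_i$ form a partition $\mu\vdash s$ occupying the top $r\le s$ indices, while the nonzero $t_i$ occupy indices $\le m-s\le a-r$; since these ranges are disjoint, the configurations for a given $s$ biject with the $p(s)$ partitions $\mu\vdash s$ paired with arbitrary $(t_i)$ satisfying $\sum_i t_i i=m-s$. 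Using $\sum_{t\ge0}(1+t)x^t=(1-x)^{-2}$ and $\sum_{s\ge0}p(s)q^s=\prod_{i\ge1}(1-q^i)^{-1}$, this gives, for every $m\le a$,
\[
[q^m]\Bigl(q^{-\binom{a+1}{2}}\mathcal{U}_a(q)\Bigr)=\sum_{s=0}^{m}p(s)\,[q^{\,m-s}]\prod_{i\ge1}\frac{1}{(1-q^i)^2}=[q^m]\prod_{i\ge1}\frac{1}{(1-q^i)^3},
\]
which is exactly assertion (i). (This is the point at which the Andrews--Rose formalism could be substituted for the bookkeeping; the estimate above is what produces the third power.)

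Finally, (ii) is one coefficient extraction. Since $MO(a;n)=[q^{\,n-\binom{a+1}{2}}]\bigl(q^{-\binom{a+1}{2}}\mathcal{U}_a(q)\bigr)$ and the hypothesis $n\le a+\binom{a+1}{2}$ is precisely $n-\binom{a+1}{2}\le a$, part (i) gives $MO(a;n)=[q^{\,n-\binom{a+1}{2}}]\prod_{i\ge1}(1-q^i)^{-3}$. Expanding the product two ways finishes the proof: the elementary $(1-q^i)^{-3}=\sum_{r\ge0}\binom{r+2}{2}q^{ir}$ yields $\sum_{\lambda\vdash n-\binom{a+1}{2}}\prod_s\binom{m_s+2}{2}$, while the Nekrasov--Okounkov identity $\sum_\lambda q^{|\lambda|}\prod_{h\in\mathcal{H}(\lambda)}(1-z/h^2)=\prod_{i\ge1}(1-q^i)^{z-1}$ at $z=-2$ yields $\sum_{\lambda\vdash n-\binom{a+1}{2}}\prod_h(1+2/h^2)$; both expressions are the same coefficient of the same product. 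I expect the only real obstacle to be the no-collision estimate together with the attendant bijection between disjoint-support configurations and arbitrary pairs $(\mu,(t_i))$ for $m\le a$; once that is in place, everything reduces to the two standard expansions of $\prod_{i\ge1}(1-q^i)^{-3}$.
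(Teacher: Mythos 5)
Your proposal is correct, but it reaches part (i) by a genuinely different route than the paper. The paper invokes the Andrews--Rose identity (Lemma~\ref{newgen_for_U}), which expresses $\mathcal{U}_a(q)\cdot\prod_{n\ge1}(1-q^n)^3$ as the theta-like sum $\frac{(-1)^a}{(2a+1)!}\sum_{n\ge0}(-1)^n(2n+1)\frac{(n+a)!}{(n-a)!}q^{n(n+1)/2}$, reads off that this equals $1-(2a+3)q^{a+1}+\cdots$, and divides by the eta-power. You instead work directly with MacMahon's combinatorial definition: the substitution $k_i=i+\ell_i$, $s_i=1+t_i$, the no-collision estimate showing that in degree $m\le a$ no index can carry both $\ell_i\ge1$ and $t_i\ge1$ (your inequality $\ell_i+t_i(i+\ell_i)+\sum_{j>i}\ell_j\ge(i+2)+(a-i)=a+2$ is sound, as is the disjointness $m-s\le a-r$ that makes the configuration count factor), and the resulting identification of the truncated coefficients with those of $\prod_i(1-q^i)^{-1}\cdot\prod_i(1-q^i)^{-2}$. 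This yields a self-contained elementary proof of (i) that bypasses Andrews--Rose entirely; what it gives up is the exact shape of the first correction term, $-(2a+3)q^{a+1}$, which the paper's identity delivers for free, along with the link to the quasimodularity framework exploited later in the paper. Part (ii) is handled essentially identically in both treatments: Nekrasov--Okounkov (Theorem~\ref{NOIdentity}) at $z=2$ (your $z=-2$ is the same identity in the other standard normalization) for the hook-length sum, and the expansion $(1-q^i)^{-3}=\sum_{r\ge0}\binom{r+2}{2}q^{ir}$ (equivalently, 3-colored partitions) for the multiplicity formula. One point worth flagging: your extraction correctly produces sums over $\lambda\vdash n-\binom{a+1}{2}$, whereas the printed theorem says $\lambda\vdash n-a$; the printed index appears to be a typo (for $a=2$, $n=5$ one has $MO(2;5)=9$, while the sum over $\lambda\vdash 3$ gives $22$), and your version is the one consistent with both the hypothesis $n-\binom{a+1}{2}\le a$ and the paper's own proof.
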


Inspired by the $\mathcal{U}_a(q),$ Amdeberhan-Andrews-Tauraso \cite{AAT}  initiated the study of the $q$-series
\begin{align} \label{Mac2} 
\mathcal{U}_a^{\star}(q):=\sum_{n\geq0}M(a;n)\,q^n=\sum_{1\leq k_1\leq k_2\leq\cdots\leq k_a} \frac{q^{k_1+k_2+\cdots+k_a}}{(1-q^{k_1})^2(1-q^{k_2})^2\cdots(1-q^{k_a})^2},
\end{align}
where the strict inequalities in (\ref{Mac1}) are replaced by weak inequalities. One easily sees that
 $$
 \mathcal{U}_a^*(q)=\sum_{n\geq0}M(a;n)q^n=q^a+(2a+1)q^{a+1}+\dots.
 $$
 To entice the reader, we offer the first few terms of $\mathcal{U}_1^*(q),\dots, \mathcal{U}_4^*(q)$:
\begin{displaymath}
\begin{split}
\mathcal{U}_1^{\star}(q)&=q+3q^2+4q^3+7q^4+6q^5+12q^6+\cdots, \\
\mathcal{U}_2^{\star}(q)&=q^2+5q^3+14q^4+29q^5+55q^6+86q^7+\cdots, \\
\mathcal{U}_3^{\star}(q)&=q^3+7q^4+27q^5+77q^6+181q^7+378q^8+\cdots, \\
\mathcal{U}_4^{\star}(q)&=q^4+9q^5+44q^6+156q^7+450q^8+1121q^9+\cdots.
\end{split}
\end{displaymath}

In analogy with Theorem~\ref{UaLimit}, we consider the limiting behavior of these series. These series converge to specializations of the generating function for the polynomials
$p_0(x):=1,
p_1(x):=2x+1,
p_2(x):=2x^2+3x,
p_3(x):=\frac{4}{3}x^3+4x^2+\frac{5}{3}x,\dots.$
For $n\geq 1,$ these polynomials are defined by
\begin{equation}
p_n(x):=\binom{2x+n-1}n+\binom{2x+n-2}{n-1}.
\end{equation}

As a companion to Theorem~\ref{UaLimit}, we obtain the following theorem.

\begin{theorem}\label{UaStarLimit} The following are true:

\noindent
(i) If $a$ is a positive integer, then we have that
$$
 q^{-a}\cdot \mathcal{U}^*_a(q)= \sum_{n=0}^{a}p_n(a)q^n+ O(q^{a+1}).
$$

\noindent
(ii) If $n\leq 2a$, then we have that
$M(a;n)=p_{n-a}(a).$
\end{theorem}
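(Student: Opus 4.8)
The plan is to prove both parts at once by establishing the single congruence
$$q^{-a}\,\mathcal{U}^*_a(q)\equiv (1+q)(1-q)^{-2a}\pmod{q^{a+1}}.$$
This suffices because the $p_n(x)$ have the closed generating function $\sum_{n\ge0}p_n(x)q^n=(1+q)(1-q)^{-2x}$: indeed $(1-q)^{-2x}=\sum_{n\ge0}\binom{2x+n-1}{n}q^n$, so multiplying by $(1+q)$ and reading off the coefficient of $q^n$ reproduces $\binom{2x+n-1}{n}+\binom{2x+n-2}{n-1}=p_n(x)$. Extracting the coefficient of $q^{d}$ for $0\le d\le a$ from both sides of the congruence then yields $M(a;a+d)=p_d(a)$, which is part (ii), and assembling these coefficients is precisely part (i).

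To analyze the left-hand side I would first rewrite $\mathcal{U}^*_a(q)$ in exponential form. A weakly increasing tuple $1\le k_1\le\cdots\le k_a$ is the same datum as a choice of multiplicities $r_v$ (the number of indices $i$ with $k_i=v$) subject to $\sum_{v\ge1}r_v=a$, and under this correspondence the summand factors as $\prod_{v\ge1}\big(q^v/(1-q^v)^2\big)^{r_v}$. Hence, writing $t:=\sum_{v\ge2}r_v$ and $r_1=a-t$,
$$q^{-a}\,\mathcal{U}^*_a(q)=\sum_{\sum_v r_v=a}\ q^{\sum_v(v-1)r_v}\prod_{v\ge1}(1-q^v)^{-2r_v}=(1-q)^{-2a}\sum_{\sum_{v\ge2}r_v\le a}\ \prod_{v\ge2}w_v^{\,r_v},$$
where $w_v:=q^{v-1}(1-q)^2(1-q^v)^{-2}$ and the second equality absorbs $(1-q)^{-2(a-t)}=(1-q)^{-2a}(1-q)^{2t}$. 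Each $w_v$ has $q$-order exactly $v-1\ge 1$, so a configuration with $t$ nontrivial parts has $q$-order $\sum_{v\ge2}(v-1)r_v\ge t$; consequently every term with $t\ge a+1$ is $O(q^{a+1})$, and I may drop the constraint $t\le a$ without changing anything modulo $q^{a+1}$. Since each $w_v$ has positive order the resulting unconstrained sum factors as a $q$-adically convergent product, reducing the theorem to the identity $\prod_{v\ge2}(1-w_v)^{-1}=1+q$.

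The crux — and the step I expect to be the main obstacle to discover — is the telescoping evaluation of this product. A direct expansion gives
$$1-w_v=\frac{(1-q^v)^2-q^{v-1}(1-q)^2}{(1-q^v)^2}=\frac{1-q^{v-1}-q^{v+1}+q^{2v}}{(1-q^v)^2}=\frac{(1-q^{v-1})(1-q^{v+1})}{(1-q^v)^2},$$
the factorization $1-q^{v-1}-q^{v+1}+q^{2v}=(1-q^{v-1})(1-q^{v+1})$ being the key observation. Writing $a_v:=1-q^v$, the partial product $\prod_{v=2}^{V}(1-w_v)=\prod_{v=2}^{V}\frac{a_{v-1}a_{v+1}}{a_v^2}$ telescopes to $\frac{a_1a_{V+1}}{a_2a_V}$, which tends $q$-adically to $a_1/a_2=(1-q)/(1-q^2)=1/(1+q)$ as $V\to\infty$. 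Hence $\prod_{v\ge2}(1-w_v)^{-1}=1+q$, which completes the reduction and therefore the proof.

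Finally, this argument also explains why the range $n\le 2a$ (equivalently $d\le a$) is sharp: the only discarded configurations are those with $t\ge a+1$, the smallest being $r_2=a+1$, whose contribution first appears at $q$-order $a+1$; thus the congruence breaks exactly at $d=a+1$. The entire difficulty is concentrated in spotting the telescoping factorization of $1-w_v$; once that is in hand, both the removal of the constraint and the coefficient extraction are routine.
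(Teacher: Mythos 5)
Your proof is correct, and it takes a genuinely different route from the paper's. The paper does not argue from the definition at all: it quotes the Amdeberhan--Andrews--Tauraso single-sum identity (Lemma~\ref{AAT-prop}, i.e.\ Prop.~4.1 of \cite{AAT})
$$
\mathcal{U}_a^{\star}(q)=\sum_{k\geq1}(-1)^{k-1}\,\frac{(1+q^k)\,q^{\binom{k}2+ak}}{(1-q^k)^{2a}},
$$
multiplies by $q^{-a}$, and notes that for $k\geq 2$ the exponents $\binom{k}2+a(k-1)$ and $\binom{k+1}2+a(k-1)$ already exceed $a$, so only the $k=1$ term --- which is exactly $(1+q)(1-q)^{-2a}$ --- contributes to the coefficients of $q^0,\dots,q^a$; coefficient extraction then yields $p_n(a)=\binom{2a+n-1}{n}+\binom{2a+n-2}{n-1}$. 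You arrive at the same truncation $(1+q)(1-q)^{-2a} \pmod{q^{a+1}}$, but self-containedly: you encode a weakly increasing tuple by its multiplicities $(r_v)$, factor out $(1-q)^{-2a}$, drop the constraint $\sum_{v\geq 2}r_v\leq a$ at the cost of $O(q^{a+1})$, and evaluate the unconstrained sum as the telescoping product $\prod_{v\geq2}(1-w_v)^{-1}=1+q$, using the factorization $(1-q^v)^2-q^{v-1}(1-q)^2=(1-q^{v-1})(1-q^{v+1})$. All of your $q$-adic convergence and rearrangement steps are legitimate since $\mathrm{ord}_q(w_v)=v-1\to\infty$. What the two approaches buy: the paper's proof is a few lines but leans on the nontrivial external identity from \cite{AAT}; yours replaces exactly the piece of that identity that is needed (the dominance of its $k=1$ term) by an elementary telescoping argument, so the theorem becomes independent of \cite{AAT}, and as a bonus your accounting of the smallest discarded configuration ($r_2=a+1$, entering at order exactly $a+1$) explains why the range $n\leq 2a$ is sharp --- something the paper's proof does not address.
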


\begin{remark} The $\mathcal{U}_a(q)$ and $\mathcal{U}_a^{\star}(q)$ are
multiple $q$-zeta values. To make this precise,
we recall the $q$-notation $[k]_q:=\frac{1-q^k}{1-q}$ and the multiple $q$-zeta values  (for example, see \cite{Brindle}) 
\begin{align*}
\zeta_q(m_1,\dots,m_a):&=\sum_{0<k_1<\cdots<k_a}\frac{q^{(m_1-1)k_1+\cdots+(m_a-1)k_a}}{[k_1]_q^{m_1}\cdots[k_a]_q^{m_a}},\\
\zeta_q^{\star}(m_1,\dots,m_a):&=\sum_{1\leq k_1\leq\cdots\leq k_a}\frac{q^{(m_1-1)k_1+\cdots+(m_a-1)k_a}}{[k_1]_q^{m_1}\cdots[k_a]_q^{m_a}}.
\end{align*}
We have that 
$(1-q)^{2a}\cdot \mathcal{U}_a(q)=\zeta_q({2,\ldots,2})$ and $(1-q)^{2a}\cdot \mathcal{U}_a^{\star}(q)
=\zeta_q^{\star}({2,\ldots,2}).$
\end{remark}

As divisor functions arise as the coefficients of Eisenstein series, identities such as (\ref{EisensteinIdentities}) suggest a strong relationship between the
$\mathcal{U}_a(q)$ and quasimodular forms. This speculation was confirmed by Andrews-Rose. Indeed, they proved 
(see \cite[Cor. 4]{Andrews-Rose}) and \cite[Th. 1.12]{rose})  that each $\mathcal{U}_a(q)$ is a linear combination of quasimodular forms on $\SL_2(\mathbb{Z})$ with weights $\leq 2a$. 
Similarly,  Amdeberhan-Andrews-Tauraso \cite[Th. 6.1]{AAT}  proved that each $\mathcal{U}_a^{\star}(q)$ is a linear combination of quasimodular forms on $\SL_2(\mathbb{Z})$ with weights $\leq 2a$. 

Here we make this quasimodularity  explicit. In the case of $\mathcal{U}_a(q),$ we employ the standard generators of the graded ring of quasimodular forms: the quasimodular weight 2 Eisenstein series 
\begin{equation}
E_2(q):=1-24\sum_{n=1}^{\infty}\sigma_1(n)q^n,
\end{equation}
and the weight 4 and 6 modular Eisenstein series
\begin{equation}
E_4(q):=1+240\sum_{n=1}^{\infty}\sigma_3(n)q^n \qquad \text{and} \qquad
E_6(q):=1-504\sum_{n=1}^{\infty}\sigma_5(n)q^n.
\end{equation}
It is well known \cite{Zagier} that the ring of quasimodular forms is $\mathbb{C}[E_2,E_4,E_6],$ and so our goal is to 
 obtain formulas in terms of the monomials $E_2^{\alpha}(q)E_4^{\beta}(q)E_6^{\gamma}(q)$, where $\alpha, \beta$ and $\gamma$ are non-negative integers. 
 
Our formulas for $\mathcal{U}_a(q)$ use of the triple index sequence of rational numbers defined by the recursion
\begin{align} \label{coefficients} \nonumber
c(\alpha,\beta,\gamma)
&:=-\frac13(2\alpha+8\beta+12\gamma+1)\cdot c(\alpha-1,\beta,\gamma)+\frac23(\alpha+1)\cdot c(\alpha+1,\beta-1,\gamma) \\
&\qquad \qquad \ \ \ \ \  +\frac83(\beta+1)\cdot c(\alpha,\beta+1,\gamma-1)+4(\gamma+1)\cdot c(\alpha,\beta-2,\gamma+1),
\end{align}
where $\alpha, \beta, \gamma\geq0.$ To seed the recursion, we let $c(0, 0, 0):=1,$ and we let
$c(\alpha,\beta,\gamma):=0$ if any of the arguments are negative. 
Here we list the ``first few'' values: 
\begin{align*}
c(1,0,0)=-1, \  c(0,1,0)=-\frac{2}{3},\  c(0,0,1)=-\frac{16}9,  \   c(1,1,0)=\frac{14}3,  \  
c(1,0,1)=\frac{64}3,\dots.
\end{align*}
We also require constants for  the quasimodular summands sorted by weight.
For $0\leq t\leq a$, define
\begin{align} \label{depend-a-t}
w_t(a):=\frac{\binom{2a}a}{16^a(2a+1)} \sum_{0\leq \ell_1<\cdots<\ell_t<a}\prod_{j=1}^t\frac1{(2\ell_j+1)^2}.
\end{align}
In terms of $w_t(a)$ and the numbers $c(\alpha, \beta, \gamma)$, we have the following explicit
formulae for $\mathcal{U}_a(q).$

\begin{theorem}\label{ExplicitExpressionUa} If $a$ is a non-negative integer, then we have that
$$
\mathcal{U}_a(q)= \sum_{t=0}^a w_t(a)
\sum_{\substack {\alpha, \beta, \gamma\geq 0\\ \alpha+2\beta+3\gamma=t}}
c(\alpha,\beta,\gamma) E_2(q)^{\alpha} E_4(q)^{\beta} E_6(q)^{\gamma}.
$$
\end{theorem}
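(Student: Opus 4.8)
The plan is to package all of the $\mathcal U_a(q)$ into one generating function in an auxiliary variable and to recognize both the scalars $w_t(a)$ and the quasimodular forms $P_t:=\sum_{\alpha+2\beta+3\gamma=t}c(\alpha,\beta,\gamma)\,E_2^\alpha E_4^\beta E_6^\gamma$ as arising from two different expansions of the \emph{same} object. Since the strict inequalities in \eqref{Mac1} mean each $k$ is used at most once, choosing a subset $\{k_1<\dots<k_a\}$ gives the product formula $\sum_{a\ge0}\mathcal U_a(q)z^a=\prod_{k\ge1}\bigl(1+z\,\tfrac{q^k}{(1-q^k)^2}\bigr)$. The first step is the substitution $z=2-2\cos\theta=4\sin^2(\theta/2)$, under which the elementary identity $1+z\,\tfrac{q^k}{(1-q^k)^2}=\tfrac{(1-e^{i\theta}q^k)(1-e^{-i\theta}q^k)}{(1-q^k)^2}$ turns the product into a quotient of theta products. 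Writing $\Phi(\theta,q):=\sum_{a\ge0}\mathcal U_a(q)z^a$ and applying Jacobi's triple product, I would obtain the closed form $\Phi=\dfrac{W(\theta,q)}{2\sin(\theta/2)}$, where $W:=\theta_1(\theta,q)/(\partial_\theta\theta_1)(0,q)=\theta_1/\eta^3$ is the normalized odd Jacobi theta function (note $\Phi(0,q)=1=\mathcal U_0$).

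Next I would convert the heat equation for $\theta_1$ into a differential equation for $\Phi$. With $D:=q\frac{d}{dq}$, the theta series gives $\partial_\theta^2\theta_1=-2D\theta_1$, while Ramanujan's $D\log\eta^3=\tfrac18E_2$ gives $\partial_\theta^2W=-2\bigl(D+\tfrac18E_2\bigr)W$. Substituting $W=2\sin(\theta/2)\,\Phi$ and dividing by $2\sin(\theta/2)$ produces the master equation
\[
\mathcal A_\theta\Phi:=\Bigl(\partial_\theta^2+\cot(\theta/2)\,\partial_\theta-\tfrac14\Bigr)\Phi=-2D\Phi-\tfrac14E_2\,\Phi.
\]
In the variable $z$ the operator becomes $\mathcal A_\theta=(4z-z^2)\partial_z^2+(6-2z)\partial_z-\tfrac14$, so it preserves formal power series in $z$ with $q$-series coefficients.

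Separately I would identify the scalar functions $\psi_t(\theta):=\sum_{a\ge0}w_t(a)z^a$. By \eqref{depend-a-t}, $w_t(a)=\tfrac{\binom{2a}{a}}{16^a(2a+1)}\,e_t\!\bigl(\tfrac1{1^2},\tfrac1{3^2},\dots,\tfrac1{(2a-1)^2}\bigr)$, where $e_t$ is the elementary symmetric polynomial; summing the generating function of the $e_t$ against $z^a$ and using the classical arcsine expansion $\tfrac{\sin(s\arcsin x)}{s}=\sum_{a\ge0}\tfrac{\binom{2a}{a}}{4^a(2a+1)}\prod_{\ell=0}^{a-1}\bigl(1-\tfrac{s^2}{(2\ell+1)^2}\bigr)x^{2a+1}$ with $x=\sin(\theta/2)$ yields the clean identity $\sum_{t\ge0}\psi_t(\theta)\,(-s^2)^t=\dfrac{\sin(s\theta/2)}{s\,\sin(\theta/2)}$. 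Since $u=\sin(s\theta/2)$ solves $u''=-\tfrac{s^2}4u$ and $h=\sin(\theta/2)$ solves $h''=-\tfrac14h$, a one-line computation of $\mathcal A_\theta(u/h)$ gives the ladder relation $\mathcal A_\theta\psi_t=\tfrac14\psi_{t-1}$ (with $\mathcal A_\theta\psi_0=0$). Because $\psi_t=\tfrac{z^t}{4^t(2t+1)!}+O(z^{t+1})$ is a triangular basis, $\Phi$ has a unique expansion $\Phi=\sum_{t\ge0}P_t\,\psi_t$ with $q$-series coefficients $P_t$, and matching at $z=0$ gives $P_0=1$.

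Finally I would feed $\Phi=\sum_tP_t\psi_t$ into the master equation. The left side equals $\sum_tP_t\,\mathcal A_\theta\psi_t=\tfrac14\sum_tP_{t+1}\psi_t$ and the right side equals $\sum_t\bigl(-2DP_t-\tfrac14E_2P_t\bigr)\psi_t$, so comparing coefficients of the independent $\psi_t$ gives the single operator recursion $P_{t+1}=-(E_2+8D)P_t$, $P_0=1$. A direct computation using $DE_2=\tfrac{E_2^2-E_4}{12}$, $DE_4=\tfrac{E_2E_4-E_6}{3}$, $DE_6=\tfrac{E_2E_6-E_4^2}{2}$ shows that this is equivalent, coefficient by coefficient, to the defining recursion \eqref{coefficients} for $c(\alpha,\beta,\gamma)$, whence $P_t=\sum_{\alpha+2\beta+3\gamma=t}c(\alpha,\beta,\gamma)E_2^\alpha E_4^\beta E_6^\gamma$; reading off $[z^a]\Phi=\sum_{t=0}^{a}w_t(a)P_t$ then gives the theorem. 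I expect the main obstacle to be the bookkeeping that pins down the exact constants in the master equation — the heat-equation constant $-2$ and the $\tfrac18E_2$ from $\eta^3$ — since any error there rescales $E_2+8D$ and breaks the match with \eqref{coefficients}; a safe route is to posit the form $\mathcal A_\theta\Phi=\lambda\bigl(D+\tfrac18E_2\bigr)\Phi$ (forced by $\Phi$ being a theta quotient) and to fix $\lambda=-2$ by matching the known expansions of $\mathcal U_1$ and $\mathcal U_2$.
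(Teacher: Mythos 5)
Your proof is correct, and the constants you flag as the main risk do check out: with the $q^{1/8}$ prefactor kept in $\theta_1$ one has $\partial_\theta^2\theta_1=-2D\theta_1$ because $(2n+1)^2/8=\binom{n+1}{2}+\tfrac18$; the Jacobi triple product gives exactly $W=2\sin(\theta/2)\Phi$ with $W=\theta_1/\eta^3$ and $\partial_\theta\theta_1(0,q)=\eta(q)^3$; and $\Theta(\eta^3)=\tfrac18E_2\,\eta^3$ (your $D$ is the paper's $\Theta$) then yields your master equation with the stated constants. Moreover, the recursion $P_{t+1}=-(E_2+8D)P_t$, unpacked with Ramanujan's identities, reproduces \eqref{coefficients} term by term, and $\sum_a w_t(a)z^a$ has leading term $z^t/(4^t(2t+1)!)$, so the triangularity argument is sound. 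Your route is genuinely different from the paper's in its core mechanism. The paper starts from the same theta quotient (its \eqref{GoodFormula} and \eqref{proEta}), but then (a) invokes Rose's structure theorem (Theorem~\ref{weight}) to know a priori that the weight-$2t$ summand of $\mathcal{U}_a(q)$ is a scalar multiple of $\Theta^t(\eta^3)/\eta^3$, converting this to the monomial basis by the same Ramanujan-identity induction you perform last (Lemma~\ref{preThm1.1}); (b) identifies $\Theta^t(\eta^3)/\eta^3$ with the Bernoulli/cycle-index expression $\mathbb{E}_{2t}(q)$ via P\'olya's formula and the Jacobi triple product; and (c) pins down the scalars by checking that $(-4)^t(2t+1)!\,w_t(a)$ satisfies the two-term recurrence in $a$ forced by the Andrews--Rose recursion \eqref{ARrecur}. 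You replace (a)--(c) by a single device: the heat equation makes $\Phi$ satisfy a second-order ODE, the classical arcsine series identifies $\sum_t\psi_t(\theta)(-s^2)^t=\sin(s\theta/2)/(s\sin(\theta/2))$, and the ladder relation $\mathcal{A}_\theta\psi_t=\tfrac14\psi_{t-1}$ plus triangularity of $\{\psi_t\}$ force both the expansion $\Phi=\sum_tP_t\psi_t$ and the operator recursion. What you gain is self-containedness (no appeal to Rose's theorem, to the Andrews--Rose recursion, or to P\'olya's cycle index) and a conceptual explanation of why elementary symmetric functions of the numbers $1/(2\ell+1)^2$ appear in \eqref{depend-a-t}; what the paper gains is economy, since its intermediate objects ($\mathbb{E}_{2t}$ and the cycle-index identity) are reused verbatim in the proof of Theorem~\ref{ExplicitExpressionUaStar}, and its scalar-matching step is a finite recurrence verification rather than exact ODE bookkeeping.
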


\begin{example} For $a=3$, Theorem~\ref{ExplicitExpressionUa} gives
\begin{displaymath}
\begin{split}
\mathcal{U}_3(q)&=\frac{5}{7168}-\frac{37E_2(q)}{46080}+\frac{5E_2(q)^2}{27648}-\frac{E_4(q)}{13824}-\frac{E_2(q)^3}{82944}+\frac{E_2(q)E_4(q)}{69120}-\frac{E_6(q)}{181440}.
\end{split}
\end{displaymath}
\end{example}

We  turn to the $\mathcal{U}_a^*(q).$ Instead of using $E_2(q), E_4(q),$ and $E_6(q),$ we use all of the Eisenstein series
\begin{equation} \label{EandSigma}
E_{2k}(q):=1-\frac{4k}{B_{2k}}\sum_{n=1}^{\infty}\sigma_{2k-1}(n)q^n,
\end{equation}
where $B_k$ is the usual $k$th Bernoulli number. Namely, we let $\mathbb{E}_0(q):=1,$ and for positive  $t$ we define
\begin{align} \label{E-star}
\mathbb{E}_{2t}^{\star}(q)
:=\sum_{(1^{m_1},\dots,t^{m_t})\vdash t}\,
\prod_{j=1}^t\frac1{m_j!}\left(-\frac{B_{2j}\,E_{2j}(q)}{(2j)\cdot (2j!)}\right)^{m_j}.
\end{align}
We require constants for the summands sorted by weight. We let $w_0^{\star}(0):=1$, and for $a>0,$ we let 
\begin{equation}
w_0^{\star}(a):=\sum_{i=1}^a\frac{(-1)^{i-1}\binom{2i}i}{16^i(2i+1)}\,w_0^{\star}(a-i).
\end{equation}
For $1\leq t\leq a$, we define
\begin{equation}
w_t^{\star}(a):=(-1)^{a+t-1}4^t(2t+1)!\,w_{t-1}(a-1).
\end{equation}
With this notation, we obtain the following explicit expressions for $\mathcal{U}_a^*(q)$.

\begin{theorem}\label{ExplicitExpressionUaStar} If $a$ is a non-negative integer, then we have that
$$
\mathcal{U}_a^{\star}(q)=\sum_{t=0}^a w_t^{\star}(a) \cdot \mathbb{E}_{2t}^{\star}(q).
$$
\end{theorem}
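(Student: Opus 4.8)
The plan is to recognize the two $q$-series as the elementary and complete homogeneous symmetric functions in the single family of variables $x_k:=q^k/(1-q^k)^2$. Since the summand of $\mathcal{U}_a^{\star}(q)$ factors as $x_{k_1}\cdots x_{k_a}$ and the indices run over \emph{weakly} increasing tuples, we have $\mathcal{U}_a^{\star}(q)=h_a(x_1,x_2,\dots)$, the complete homogeneous symmetric function, so that $\sum_{a\ge0}\mathcal{U}_a^{\star}(q)X^a=\prod_{k\ge1}(1-x_kX)^{-1}$. First I would linearize each factor by the substitution $X=4\sinh^2(u/2)$ (equivalently $2+X=y+y^{-1}$ with $y=e^{u}$), which gives $1-x_kX=(1-yq^k)(1-q^k/y)/(1-q^k)^2$ and hence the infinite product
\[ \sum_{a\ge0}\mathcal{U}_a^{\star}(q)X^a=\prod_{k\ge1}\frac{(1-q^k)^2}{(1-yq^k)(1-q^k/y)}. \]

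Next I would take the logarithm and expand each factor by $\log(1-w)=-\sum_{m\ge1}w^m/m$. Summing the geometric series $\sum_{k\ge1}q^{mk}=q^m/(1-q^m)$ and using $y^m+y^{-m}-2=2\sum_{j\ge1}(mu)^{2j}/(2j)!$ collapses the double sum into $\sum_{j\ge1}\tfrac{2u^{2j}}{(2j)!}\sum_{n\ge1}\sigma_{2j-1}(n)q^n$. Rewriting $\sum_n\sigma_{2j-1}(n)q^n=\tfrac{B_{2j}}{4j}(1-E_{2j}(q))$ and exponentiating splits the generating function into a $q$-free factor and an Eisenstein factor,
\[ \sum_{a\ge0}\mathcal{U}_a^{\star}(q)X^a=\Psi(u)\cdot\exp\!\Big(-\sum_{j\ge1}\frac{B_{2j}E_{2j}(q)}{2j\,(2j)!}u^{2j}\Big),\qquad \Psi(u):=\exp\!\Big(\sum_{j\ge1}\frac{B_{2j}}{2j\,(2j)!}u^{2j}\Big). \]
By the generating function for the Bernoulli numbers one identifies $\Psi(u)=2\sinh(u/2)/u$, and by its very definition the second factor is exactly $\sum_{t\ge0}\mathbb{E}_{2t}^{\star}(q)\,u^{2t}$ (with $Y=u^2$). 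Thus $\mathcal{U}_a^{\star}(q)=\sum_{t\ge0}\mathbb{E}_{2t}^{\star}(q)\cdot[X^a]\big(\Psi(u)u^{2t}\big)$, and the theorem reduces to the purely numerical identity $w_t^{\star}(a)=[X^a]\big(\Psi(u)u^{2t}\big)$.

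To evaluate this coefficient I would invert the substitution, writing $u=2\operatorname{arcsinh}(z)$ with $z=\tfrac12\sqrt{X}$, so that $X=4z^2$ and $[X^a]=4^{-a}[z^{2a}]$. For $t=0$ this gives $w_0^{\star}(a)=4^{-a}[z^{2a}]\big(z/\operatorname{arcsinh}z\big)$; since $\operatorname{arcsinh}z/z=\sum_{i\ge0}\tfrac{(-1)^i\binom{2i}{i}}{4^i(2i+1)}z^{2i}$, multiplying the two reciprocal series and matching coefficients reproduces precisely the stated recursion for $w_0^{\star}(a)$. For $t\ge1$ one finds $\Psi(u)u^{2t}=4^{t}\,z\,(\operatorname{arcsinh}z)^{2t-1}$, whence $w_t^{\star}(a)=4^{t-a}\,[z^{2a-1}](\operatorname{arcsinh}z)^{2t-1}$. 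The key technical input is then the classical expansion of an odd power of $\arcsin$ (equivalently $\operatorname{arcsinh}$, up to signs) as a nested sum over reciprocal odd squares,
\[ [z^{2a-1}](\arcsin z)^{2t-1}=\frac{(2t-1)!\binom{2a-2}{a-1}}{4^{a-1}(2a-1)}\sum_{0\le\ell_1<\cdots<\ell_{t-1}<a-1}\;\prod_{j=1}^{t-1}\frac{1}{(2\ell_j+1)^2}. \]
The nested sum here is exactly the one defining $w_{t-1}(a-1)$ in \eqref{depend-a-t}, so substituting it in and tracking the elementary powers of $4$, the factorial, and the overall sign coming from $\operatorname{arcsinh}$ versus $\arcsin$ identifies the coefficient with the constant $w_t^{\star}(a)$, which is thereby exhibited as the asserted signed multiple of $w_{t-1}(a-1)$.

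The main obstacle is this last step: proving the nested-sum formula for $(\arcsin z)^{2t-1}$ and matching every constant. I expect to handle it by induction on $t$ using the differential relation $(1-z^2)y''-zy'=n(n-1)(\arcsin z)^{n-2}$ satisfied by $y=(\arcsin z)^{n}$; passing to Taylor coefficients turns this into a two-term recursion that introduces exactly one new factor $(2\ell+1)^{-2}$ at each stage, manufacturing the nested sum. This is the same mechanism that produces the constants $w_t(a)$ of Theorem~\ref{ExplicitExpressionUa}, so the combinatorial bookkeeping—and in particular the precise sign and factorial relating $w_t^{\star}(a)$ to $w_{t-1}(a-1)$—can be imported from there. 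The only delicate points I anticipate are keeping the $\operatorname{arcsinh}$-versus-$\arcsin$ signs straight and confirming the power-of-two and factorial normalizations against the definition of $w_t^{\star}(a)$.
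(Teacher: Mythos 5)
Your strategy is sound, and in its first half it coincides with the paper's own argument: identifying $\mathcal{U}_a^{\star}(q)=h_a(x_1,x_2,\dots)$ and writing $\sum_{a}\mathcal{U}_a^{\star}(q)X^a=\prod_{k}(1-x_kX)^{-1}$ is exactly what the paper obtains by reciprocating \eqref{GoodFormula} via Lemma~\ref{ele-hom}, and your logarithm/exponential computation splitting the product into $\Psi(u)\cdot\sum_t\mathbb{E}_{2t}^{\star}(q)u^{2t}$ is the analogue of \eqref{proSigma} and \eqref{SigmaPolya}. Where you genuinely diverge is in pinning down the numerical constants: the paper argues ``as in the proof of Theorem~\ref{ExplicitExpressionUa},'' i.e.\ it transfers the coefficients from the $\mathcal{U}_a$ expansion through the Andrews--Rose recursion and the convolution of Lemma~\ref{ele-hom}, whereas you evaluate $[X^a]\bigl(\Psi(u)u^{2t}\bigr)$ directly via $z=\sinh(u/2)$ and the nested-sum expansion of odd powers of $\operatorname{arcsinh}$. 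Your route is more self-contained and makes the origin of the constants transparent; its only real overhead is the Borwein--Chamberland-type expansion of $(\arcsin z)^{2t-1}$, and your ODE-induction plan does prove it: passing to coefficients in $(1-z^2)y''-zy'=n(n-1)(\arcsin z)^{n-2}$ gives $(m+2)(m+1)c^{(n)}_{m+2}=m^2c^{(n)}_m+n(n-1)c^{(n-2)}_m$, which is precisely what manufactures one factor $(2\ell+1)^{-2}$ per step.

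One important warning about the final bookkeeping, which you yourself flagged as the delicate point: carried out carefully, your computation yields
$$
w_t^{\star}(a)=4^{t-a}\,[z^{2a-1}](\operatorname{arcsinh} z)^{2t-1}=(-1)^{a+t}\,4^{t-1}(2t-1)!\,w_{t-1}(a-1),
$$
which is \emph{not} the constant $(-1)^{a+t-1}4^t(2t+1)!\,w_{t-1}(a-1)$ displayed in the paper; the two differ by the factor $-8t(2t+1)$. Your value is the correct one. Indeed, for $a=1$ one has $\mathcal{U}_1^{\star}(q)=\tfrac1{24}-\tfrac{E_2(q)}{24}=\tfrac1{24}+\mathbb{E}_2^{\star}(q)$, forcing $w_1^{\star}(1)=1$ (your formula), while the paper's displayed definition gives $-24$; likewise your formula reproduces the paper's own $a=5$ example (coefficient $1$ on $\mathbb{E}_{10}^{\star}$, $-\tfrac7{24}$ on $\mathbb{E}_8^{\star}$, $\tfrac{47}{1152}$ on $\mathbb{E}_6^{\star}$), while the displayed definition does not. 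So when you ``confirm the normalizations against the definition of $w_t^{\star}(a)$,'' you will hit a genuine mismatch; it should be resolved by correcting the stated definition of $w_t^{\star}(a)$, not by distrusting your $\operatorname{arcsinh}$ computation.
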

\begin{example} For $a=5$, Theorem~\ref{ExplicitExpressionUaStar} gives
\begin{displaymath}
\begin{split}
\mathcal{U}_5^*(q)&=\frac{1295803}{12262440960}+\frac{35}{294912}\mathbb{E}_2^{\star}(q)-\frac{3229}{967680}\mathbb{E}_4^{\star}(q)+\frac{47}{1152}\mathbb{E}_6^{\star}(q)
-\frac7{24}\mathbb{E}_8^{\star}(q)+\mathbb{E}_{10}^{\star}(q).
\end{split}
\end{displaymath}
\end{example}

The coefficients of $\mathcal{U}_a(q)$ and $U_a^{\star}(q)$ satisfy surprising congruences. Amdeberhan-Andrews-Tauraso \cite{AAT} discovered some 
congruences that are reminiscent of Ramanujan's partition congruences, such as
\begin{displaymath}
\begin{split}
MO(2;5n+2)\equiv 0\pmod 5 \ \ \ \ \ {\text {\rm and}}\ \ \ \ 
MO(3;7n+3)\equiv MO(3;7n+5)\equiv 0\pmod 7.
\end{split}
\end{displaymath}
Moreover, they conjectured (see Conjecture 9.1 of \cite{AAT}) that
\begin{equation}\label{AATConjecture}
MO(10; 11n+7)\equiv 0\pmod{11}.
\end{equation}

\begin{theorem}\label{Mod11}
For every non-negative integer $n$, we have that
$$
MO(10;11n+7)\equiv 0\pmod{11}.
$$
\end{theorem}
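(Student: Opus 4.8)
The plan is to use the explicit formula of \Cref{ExplicitExpressionUa} with $a=10$ to realize $\mathcal{U}_{10}(q)\bmod 11$ as the reduction of a level-one modular form mod $11$, then to isolate the progression $n\equiv 7\pmod{11}$ by an operator that preserves mod-$11$ modularity, and finally to close the argument with a Sturm-type bound. The first step is to reduce the quasimodular generators. The relevant congruences are $E_4E_6=E_{10}$ together with $E_{10}\equiv 1\pmod{11}$ (so $E_4E_6\equiv 1$), and $E_2\equiv E_{12}\pmod{11}$; the latter follows from $\sigma_{11}(n)\equiv\sigma_1(n)\pmod{11}$ and from $\tfrac{65520}{691}\equiv-24\pmod{11}$, since $65520\equiv 4$, $691\equiv 9$, and $4\cdot 9^{-1}\equiv 4\cdot 5\equiv 9\equiv-24\pmod{11}$. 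Replacing the quasimodular $E_2$ by the genuine weight-$12$ form $E_{12}$ converts the polynomial of \Cref{ExplicitExpressionUa} into an honest modular object modulo $11$.

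The main obstacle is $11$-integrality. Writing $\mathcal{U}_{10}(q)=\sum_{t=0}^{10}w_t(10)\,\Phi_t(q)$, where $\Phi_t:=\sum_{\alpha+2\beta+3\gamma=t}c(\alpha,\beta,\gamma)E_2^{\alpha}E_4^{\beta}E_6^{\gamma}$ is the $a$-independent weight-$2t$ piece, the numbers $\Phi_t$ are $11$-integral because the $c(\alpha,\beta,\gamma)$ defined by (\ref{coefficients}) have only powers of $3$ in their denominators. All the difficulty is therefore concentrated in the scalars $w_t(10)$ of (\ref{depend-a-t}): the index $\ell=5$ contributes a factor $(2\ell+1)^2=121$, and although $\binom{20}{10}=2^2\cdot 11\cdot 13\cdot 17\cdot 19$ supplies a single factor of $11$, one finds $v_{11}(w_t(10))=-1$ for $1\le t\le 10$. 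Thus the individual monomials $E_2^{\alpha}E_4^{\beta}E_6^{\gamma}$ are not $11$-integral; only the full sum $\mathcal{U}_{10}(q)\in\Z[[q]]$ is, the $11$-denominators cancelling across the relations $E_4E_6\equiv 1$ and $E_2\equiv E_{12}$. The payoff of resolving this is that the reduction $\mathcal{U}_{10}\bmod 11$, obtained by reducing the (integral) $q$-expansion, is a well-defined mod-$11$ modular form; its modularity can be justified by Serre's theory, since $E_2$ is an $11$-adic modular form and a quasimodular form is therefore $11$-adically modular. I expect the genuinely hard part to be controlling the \emph{filtration} $k_0$ of this reduction: because $\mathcal{U}_{10}=q^{55}+\cdots$ vanishes to order $55$, and a nonzero mod-$11$ modular form of filtration $k$ vanishes to order at most $k/12$, the singular factor above forces $k_0\ge 660$, so the naive weight bound $\le 20$ is irrelevant and an upper bound on $k_0$ must be extracted (by the $p$-adic theory or by computing the $q$-expansion and matching it in $\F_{11}[E_4,E_6]$).

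With $\mathcal{U}_{10}\bmod 11$ so realized, I would extract the progression using the Fermat indicator $[\,n\equiv 7\,]\equiv 1-(n-7)^{10}\pmod{11}$, which in terms of $\theta:=q\,\tfrac{d}{dq}$ reads
\[
\sum_{n\equiv 7\,(11)}MO(10;n)\,q^n\;\equiv\;\mathcal{U}_{10}(q)-(\theta-7)^{10}\,\mathcal{U}_{10}(q)\pmod{11}.
\]
Since $\theta$ sends mod-$11$ modular forms to mod-$11$ modular forms, raising filtration by at most $p+1=12$, the right-hand side is a mod-$11$ modular form of filtration at most $k_0+120$. By Sturm's bound it vanishes identically as soon as its coefficients vanish up to $\lfloor (k_0+120)/12\rfloor$. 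The progression $n\equiv 7\pmod{11}$ is empty below $q^{62}$ (the smallest admissible index is $55+7=62$), so the assertion reduces to verifying $MO(10;n)\equiv 0\pmod{11}$ for the finitely many $n\equiv 7\pmod{11}$ with $62\le n\le\lfloor(k_0+120)/12\rfloor$, which I would confirm by direct computation from (\ref{Mac1}). The crux, to reiterate, is the $11$-adic bookkeeping forced by the singular factor $2\cdot 5+1=11$: it is what makes the reduction modular of large but bounded filtration, and simultaneously explains why the congruence is special to the prime $11$ and the index $a=10=p-1$.
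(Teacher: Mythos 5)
Your reductions $E_4E_6=E_{10}\equiv 1\pmod{11}$ and $E_2\equiv E_{12}\pmod{11}$ are correct, and your valuation analysis of $w_t(10)$ is sound, but the object your proof runs on does not exist. After the substitution $E_2\mapsto E_{12}$, the monomials of Theorem~\ref{ExplicitExpressionUa} acquire weights $12\alpha+4\beta+6\gamma$, and these occupy all five even residue classes $0,2,4,6,8$ modulo $p-1=10$; multiplication by powers of $E_{10}\equiv 1$ moves weights only in steps of $10$. Consequently $\mathcal{U}_{10}\bmod 11$ is a sum of five mod-$11$ modular forms lying in five \emph{distinct} weight classes modulo $10$, and by the structure theory of modular forms mod $p$ (the algebra of mod-$p$ forms is graded by weight modulo $p-1$; see \cite{SwD}), a sum having nonzero components in two or more classes is not congruent modulo $p$ to any single modular form of any weight. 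At least two (in fact all five) of these components are nonzero here, so ``the filtration $k_0$ of $\mathcal{U}_{10}\bmod 11$'' is simply undefined: there is no form to which Sturm's bound (Theorem~\ref{Sturm}, which compares two forms of one common weight) could be applied. Your own numerics are the symptom, not the difficulty: the forced lower bound $k_0\ge 660$ against the apparent weight bound $120$ is a contradiction refuting the single-form hypothesis, and the upper bound on $k_0$ that you flag as ``the genuinely hard part'' can never be extracted. Even on its own terms the proposal stops short---no value of $k_0$ is produced---so the theorem is never reduced to a finite verification.

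The missing idea is to decompose \emph{before} reducing, which is how the paper proceeds: write $\mathcal{U}_{10}=F_0+F_2+F_4+F_6+F_8$, where $F_{2i}$ collects those monomials with $2\alpha+4\beta+6\gamma\equiv 2i\pmod{10}$. The delicate point---and your $11$-adic bookkeeping shows it is genuinely delicate, since the individual weight-$2t$ pieces are not $11$-integral---is that each class sum $F_{2i}$ \emph{is} $11$-integral: the denominators of $11$ arising from the factor $(2\cdot 5+1)^2=121$ in $w_t(10)$ cancel within each class $t\bmod 5$, not merely in the total sum. Once this is in hand, each $\widehat F_{2i}$ is an honest mod-$11$ form of weight at most $120$, the theta operator preserves each class while raising the weight by $p+1=12$ (Lemma~\ref{lem1}), your Fermat-indicator extraction of the progression $n\equiv 7\pmod{11}$ goes through class by class, and Sturm's theorem applies to the five resulting forms (weights at most $228$), requiring at most $20$ coefficients each---exactly the paper's finite check.
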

We offer two proofs of this result. The first proof uses
the explicit description of $\mathcal{U}_{10}(q)$ provided by Theorem~\ref{ExplicitExpressionUa},  which allows us to employ the ``theory of modular forms mod $p$''. This proof illustrates an algorithm that reduces the proof of all conjectured congruences of the form
$$
MO(a;pn+r)\equiv 0\pmod{p} \ \ \ \ {\text {\rm and}}\ \ \ \ M(a;pn+r)\equiv 0\pmod p
$$
to finitely many steps. Theorem~\ref{Mod11} requires computing at most 20 terms of five auxiliary $q$-series.

The second proof is a special case of one of three new infinite families of congruences.
\begin{theorem}\label{Gordon} The following are true:

\noindent
(i) For every pair of non-negative integers $n$ and $m,$ we have that
$$
MO(3m+2;3n+1)\equiv MO(3m+2;3n+2)\equiv  0\pmod 3.
$$

\noindent
(ii) For every pair of non-negative integers $n$ and $m,$ we have
\begin{displaymath}
MO(11m+10; 11n+7)\equiv  0\pmod{11}.
\end{displaymath}

\noindent
(iii) For every pair of non-negative integers $n$ and $m,$ we have
\begin{displaymath}
MO(17m+16; 17n+15)\equiv  0\pmod{17}.
\end{displaymath}

\end{theorem}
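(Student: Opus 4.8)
The plan is to work entirely modulo $p$ with the generating function in an auxiliary variable $z$ that records the parameter $a$. Since the parts in \eqref{Mac1} are distinct, one has the product expansion
\[
\sum_{a\ge0}\mathcal{U}_a(q)\,z^a=\prod_{k\ge1}\Bigl(1+z\,u_k(q)\Bigr),\qquad u_k(q):=\frac{q^k}{(1-q^k)^2}.
\]
The key elementary observation is the Frobenius relation $u_{pk}(q)\equiv u_k(q)^p\pmod p$, which follows from $(1-q^k)^p\equiv 1-q^{pk}$. Splitting the product according to whether $p\mid k$ then gives, modulo $p$,
\[
\prod_{k\ge1}\bigl(1+z\,u_k(q)\bigr)\equiv\Bigl(\prod_{p\nmid k}\bigl(1+z\,u_k(q)\bigr)\Bigr)\cdot\Bigl(\prod_{k\ge1}\bigl(1+z\,u_k(q)^p\bigr)\Bigr),
\]
and since $u_k(q)^p=u_k(q^p)$ the second factor is $\sum_b\mathcal{U}_b(q^p)z^b$. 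Writing $V_c(q):=\sum_{p\nmid k_1<\dots<k_c}u_{k_1}(q)\cdots u_{k_c}(q)$ for MacMahon's sum taken over parts coprime to $p$, extraction of the coefficient of $z^a$ yields the recursion
\[
\mathcal{U}_a(q)\equiv\sum_{c=0}^{a}V_c(q)\,\mathcal{U}_{a-c}(q)^p\pmod p.
\]

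First I would use this to localize the congruences. Because each $\mathcal{U}_{a-c}(q)^p$ is a power series in $q^p$, the coefficient of $q^{pn+r}$ in $\mathcal{U}_a(q)$ is governed entirely by the part of the $V_c(q)$ supported on exponents $\equiv r\pmod p$. Thus, denoting by $V_c^{(r)}$ that residue-$r$ component, the desired congruence $MO(a;pn+r)\equiv0$ for all $n$ is equivalent to
\[
\sum_{c=0}^{a}V_c^{(r)}(q)\,\mathcal{U}_{a-c}(q)^p\equiv0\pmod p.
\]
This already explains the shape of the three families: the constraint $a\equiv -1\pmod p$ is precisely what is needed to isolate the relevant $z$-degrees (coefficients of $z^a$ with $a\equiv-1$ are extracted over $\mathbb{F}_p$ by applying the operator $(z\partial_z+1)^{p-1}-1$, via Wilson's theorem), and for $p=3$ the two residues $r=1,2$ collapse to the single clean statement that $\mathcal{U}_{3m+2}(q)$ is a cube in $\mathbb{F}_3[[q]]$, i.e. $\theta\,\mathcal{U}_{3m+2}\equiv0\pmod3$ for $\theta=q\,d/dq$.

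Next I would settle the base case $m=0$ for each prime, namely the congruences for $\mathcal{U}_{p-1}$ with $a=2,10,16$. Here I would invoke Theorem~\ref{ExplicitExpressionUa} to write $\mathcal{U}_{p-1}(q)$ explicitly as a quasimodular form and reduce modulo $p$, controlling the $p$-adic valuations of the weights $w_t(p-1)$: the factor $\binom{2a}{a}$ has positive $p$-adic valuation when $a\equiv-1\pmod p$ (by Kummer's theorem there is a carry in $a+a$ base $p$), and weighing this against the contributions of $1/(2\ell+1)^2$ with $2\ell+1\equiv0\pmod p$ pins down which weight components $g_t$ survive. The surviving combination lands in a fixed, low-dimensional space of mod-$p$ modular forms, whose $q^{pn+r}$-coefficients then vanish by a finite check through the theory of modular forms modulo $p$ — exactly the algorithm underlying the first proof of Theorem~\ref{Mod11}, now run also at $p=3$ and $p=17$.

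The main obstacle is the bootstrap from $m=0$ to all $m$, i.e. proving the displayed identity $\sum_c V_c^{(r)}(q)\,\mathcal{U}_{a-c}(q)^p\equiv0$ uniformly for $a\equiv-1\pmod p$. The difficulty is that the sum couples the residue-$r$ parts $V_c^{(r)}$ of the coprime-to-$p$ MacMahon series against the full series $\mathcal{U}_{a-c}(q)^p$, so a naive induction on $m$ does not close. I expect the resolution to require an intrinsic symmetry of the $V_c^{(r)}$ — most plausibly a Hecke-type or $\theta$-operator relation identifying $\sum_c V_c^{(r)}(q)z^c$ with the residue-$r$ projection of a recognizable (quasi)modular object whose relevant coefficients vanish — packaged so that it combines with the $z$-degree projection onto $a\equiv-1\pmod p$. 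For $p=3$ this should reduce, via $\theta\,\mathcal{U}_{3m+2}\equiv\sum_c(\theta V_c)\,\mathcal{U}_{3m+2-c}(q^3)\pmod 3$, to showing that the coprime-part series is annihilated by $\theta$ in the appropriate degrees; the single-residue cases $p=11$ and $p=17$ are harder, and there I anticipate leaning on the explicit quasimodular description of the coprime-to-$p$ pieces together with the base-case computation to force the uniform vanishing.
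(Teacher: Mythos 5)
Your Frobenius splitting $\prod_{k}(1+zu_k)\equiv\bigl(\prod_{p\nmid k}(1+zu_k)\bigr)\cdot\sum_b\mathcal{U}_b(q^p)z^b\pmod p$ is correct (note $u_k(q)^p=u_k(q^p)$ holds only as a congruence mod $p$, not as an identity), and so is the localization: the desired congruences are equivalent to $\sum_{c}V_c^{(r)}(q)\,\mathcal{U}_{a-c}(q^p)\equiv 0\pmod p$. But the proposal stops exactly where the proof has to happen. You yourself flag the bootstrap from $m=0$ to all $m$ as ``the main obstacle'' and offer only speculation about an unspecified ``intrinsic symmetry'' of the $V_c^{(r)}$; no such relation is identified, let alone proved, and a naive induction does not close (as you note). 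The base cases are also not actually done: $a=10$, $p=11$ is Theorem~\ref{Mod11}, but for $p=3$ and especially $p=17$ (where $a=16$ and the weights run up to $32$) the finite mod-$p$ computations are merely asserted. So there is a genuine gap: the entire mechanism that makes the congruence uniform in $m$ is missing.

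The idea you are missing is that no bootstrap is needed at all, because the Andrews--Rose identity (Lemma~\ref{newgen_for_U}) handles every $a\equiv-1\pmod{\ell}$ simultaneously. It gives $\mathcal{U}_a(q)\cdot\prod_{n\geq1}(1-q^n)^3=\frac{(-1)^a}{(2a+1)!}\sum_{n\geq0}(-1)^n(2n+1)\frac{(n+a)!}{(n-a)!}\,q^{n(n+1)/2}$, and the elementary observation is that for $\ell\in\{3,11,17\}$ and $a\equiv-1\pmod{\ell}$ the coefficient $\frac{(2n+1)(n+a)!}{(2a+1)!\,(n-a)!}$ is divisible by $\ell$ whenever $\ell\nmid n(n+1)/2$. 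Hence, uniformly in $m$, $\mathcal{U}_a(q)\equiv P_3(q)\cdot g_a(q^{\ell})\pmod{\ell}$, where $P_3(q)=\prod_{n\geq1}(1-q^n)^{-3}=\sum c_3(n)q^n$ and $g_a(q^\ell)\in(\Z/\ell\Z)[[q^{\ell}]]$. Since the second factor only shifts exponents by multiples of $\ell$, the claimed congruences for $MO(a;\ell n+r)$ reduce to congruences for the $3$-colored partition numbers $c_3$ on the progressions $r\pmod{\ell}$: mod $3$ this is immediate from $P_3(q)\equiv\sum p(n)q^{3n}\pmod 3$; mod $11$ it is Gordon's theorem $c_3(11n+7)\equiv0\pmod{11}$; and mod $17$ it follows from $q^2P_3(q)\prod_{n\geq1}(1-q^{17n})^3\equiv\Delta(z)^2\pmod{17}$ together with the single Hecke computation $\Delta^2\,|\,T_{17}\equiv0\pmod{17}$. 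Your coprime-to-$p$ series $\prod_{p\nmid k}(1+zu_k(q))$ is a natural object, but nothing in your outline supplies the uniform vanishing it would need; the single-sum identity is what replaces it, and with it the quasimodular machinery of Theorem~\ref{ExplicitExpressionUa} is not needed for this theorem at all.
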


\noindent
Computer searches for congruences suggest that such congruences are rare,
thereby underscoring the significance of Theorem~\ref{Mod11}.
However, it turns out that congruences are both rare and ubiquitous.

\begin{theorem}\label{Lacunarity}
For positive integers $a$ and $m$, the following are true:

\noindent
(i) There are infinitely many non-nested arithmetic progressions $tn+r$ {\rm(}resp. $t^*n+r^*${\rm )} for which
\begin{displaymath}
\begin{split}
M(a;tn+r)&\equiv 0\pmod m,\\
MO(a;t^*n+r^*)&\equiv 0\pmod m.
\end{split}
\end{displaymath}

\noindent
(ii) There are infinitely many non-nested arithmetic progressions $tn+r$ for which
$$
M(a;tn+r)\equiv MO(a;tn+r)\equiv 0\pmod m.
$$

\noindent
(iii) There exists a positive real number $\alpha(a,m)>0$ for which
\begin{displaymath}
\begin{split}
\# \{ n\leq X \ : \ M(a;n) \not \equiv 0\pmod m\} &= O\left(X/\log^{\alpha(a,m)} X \right)\\
\# \{ n\leq X \ : \ MO(a;n) \not \equiv 0\pmod m\} &= O\left(X/\log^{\alpha(a,m) X}\right).
\end{split}
\end{displaymath}
In other words, the values $M(a;n)$ and $MO(a;n)$ are almost always multiples of any integer $m.$
\end{theorem}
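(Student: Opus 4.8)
The plan is to show that $\mathcal{U}_a(q)$ and $\mathcal{U}_a^{\star}(q)$, although only quasimodular, become genuine modular forms after reduction modulo any prime power, and then to feed these reductions into the established machinery for congruences and lacunarity of modular forms. The starting point is that $MO(a;n)$ and $M(a;n)$ are non-negative integers, so both series lie in $\mathbb{Z}[[q]]$, while Theorems~\ref{ExplicitExpressionUa} and \ref{ExplicitExpressionUaStar} place them in $\mathbb{Q}[E_2,E_4,E_6]$. Since $E_2(q)-pE_2(q^p)$ is a holomorphic modular form of weight $2$ on $\Gamma_0(p)$ for every prime $p$, the series $E_2$ is a $p$-adic modular form in the sense of Serre; hence so is every polynomial in $E_2,E_4,E_6$, and in particular each of $\mathcal{U}_a$ and $\mathcal{U}_a^{\star}$. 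Concretely, iterating the identity $E_2(q)=\bigl(E_2(q)-pE_2(q^p)\bigr)+pE_2(q^p)$ yields
\begin{equation*}
E_2(q)\equiv \sum_{i=0}^{e-1}p^i\bigl(E_2(q^{p^i})-pE_2(q^{p^{i+1}})\bigr)\pmod{p^e},
\end{equation*}
whose right-hand side is a modular form of weight $2$ on $\Gamma_0(p^e)$. The main technical point at this stage is $p$-integrality: the explicit formulae carry denominators (powers of $2$ when $p=2$, factors of $3$ when $p=3$, and occasional factors of $p\mid 2a+1$), so I would argue integrality of the reductions through the $p$-adic theory, using the integrality of the $q$-expansions rather than reducing the rational monomial expansions termwise. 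The upshot I would record is: for each prime $p$ and each $e\ge 1$ there are classical modular forms $F_{p,e}$ and $F_{p,e}^{\star}$ on some $\Gamma_0(N)$ with $p$-integral coefficients such that $\mathcal{U}_a\equiv F_{p,e}$ and $\mathcal{U}_a^{\star}\equiv F_{p,e}^{\star}\pmod{p^e}$.

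For part (iii), I would apply Serre's theorem on the lacunarity of the coefficients of a modular form modulo an integer to $F_{p,e}$ and $F_{p,e}^{\star}$. For each prime power $p^{v}\parallel m$ this gives $\#\{n\le X: p^{v}\nmid MO(a;n)\}=O(X/\log^{\alpha_p}X)$, and likewise for $M(a;n)$, with some $\alpha_p>0$. Since $m\nmid n$ exactly when $p^{v_p(m)}\nmid n$ for some $p\mid m$, the exceptional set for the modulus $m$ is the finite union of the exceptional sets for the prime powers dividing $m$, and a finite union of sets of size $O(X/\log^{\alpha_p}X)$ is $O(X/\log^{\alpha}X)$ with $\alpha=\min_{p\mid m}\alpha_p>0$. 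This yields the two displayed bounds, and hence the ``almost always divisible'' conclusion of (iii).

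For parts (i) and (ii), I would invoke the Hecke-theoretic production of congruences on arithmetic progressions (in the style of Ono and Treneer). Working with $F_{p,e}$ on $\Gamma_0(N)$, the Deligne Galois representations attached to the cuspidal constituents, together with the Chebotarev density theorem, furnish a positive density of primes $Q$ for which the coefficients of $F_{p,e}$ supported on suitable $Q$-multiples vanish modulo $p^e$; the Eisenstein constituents are handled separately through the explicit multiplicativity of divisor sums on residue classes $Q\equiv -1$. Each such $Q$ produces an arithmetic progression $tn+r$ with $Q\mid t$ and $\gcd(r,Q)=1$ on which $MO(a;tn+r)\equiv 0\pmod{p^e}$, and choosing infinitely many distinct primes $Q$ yields progressions whose moduli have distinct prime factors, hence non-nested progressions; the same argument applied to $F_{p,e}^{\star}$ gives (i) for $M(a;n)$. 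For part (ii) I would run the argument on a single space containing the reductions of both $\mathcal{U}_a$ and $\mathcal{U}_a^{\star}$ over $\Gamma_0(\mathrm{lcm})$, selecting primes $Q$ in the intersection of the two density-positive sets, so that a common progression annihilates both coefficient sequences simultaneously.

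The hard part will be twofold. First, establishing the reduction to classical modular forms modulo $p^e$ with genuine control --- pinning down the level, confirming $p$-integrality of the reductions despite the denominators in Theorems~\ref{ExplicitExpressionUa}--\ref{ExplicitExpressionUaStar} (especially for $p\in\{2,3\}$), and keeping the weights integral after normalizing by powers of $E_{p-1}$. Second, adapting the cuspidal Hecke/Chebotarev argument so that it delivers congruences valid for \emph{every} $n$ in an honest arithmetic progression (not merely on a density-one set, which is all that lacunarity provides), while correctly accounting for the non-cuspidal Eisenstein components of our forms; this coupling of the cuspidal and Eisenstein pieces, and the bookkeeping needed to make the progressions in (ii) common to both families, is where I expect the real work to lie.
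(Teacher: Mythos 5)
Your proposal is correct in substance and follows the same overall pipeline as the paper: express $\mathcal{U}_a$ and $\mathcal{U}_a^{\star}$ via Theorems~\ref{ExplicitExpressionUa} and \ref{ExplicitExpressionUaStar}, show the graded pieces become classical modular forms after reduction modulo prime powers, and then invoke Serre's divisibility machinery for congruences and lacunarity (with the same CRT/union argument over prime powers dividing $m$). The differences are in execution. First, you pass to level $\Gamma_0(p^e)$ using the elementary fact that $E_2(q)-pE_2(q^p)$ is modular of weight $2$ on $\Gamma_0(p)$; the paper instead uses Serre's $p$-adic theory (its Lemma~\ref{lem2}, built from $E_{2+(p-1)p^{m-1}}$ and the $V_{p^i}$ operators) to realize $E_2\pmod{p^m}$ as the reduction of a form on $\SL_2(\Z)$ itself. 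Staying at level $1$ is what lets the paper quote Serre's results off the shelf (its Lemma~\ref{Serre_lemma}): a positive proportion of primes $Q\equiv -1\pmod{M}$ simultaneously annihilate any finite list of level-$1$ forms under $T_Q$, plus the lacunarity bound. Second, what you propose for parts (i) and (ii) --- Deligne representations, Chebotarev, separate Eisenstein treatment in the style of Ono--Treneer --- is essentially a re-derivation of that cited theorem of Serre, adapted to the higher level your reduction forces; the paper avoids this by design. Note also that the step you flag as hard (getting congruences on honest progressions rather than density-one sets) is standard once $T_Q$-annihilation is in hand: $f\,|\,T_Q\equiv 0\pmod{p^e}$ gives $a(Qn)\equiv 0\pmod{p^e}$ for all $n$ coprime to $Q$, i.e.\ the progressions $Q^2n+Qr$ with $\gcd(r,Q)=1$, and distinct primes $Q$ give non-nested progressions. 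Finally, the $p$-integrality of the graded pieces, which you rightly flag, is glossed over in the paper as well; it can be handled cheaply by clearing the common denominator $D$ and proving the congruences for $D\cdot\mathcal{U}_a$ modulo $mD$, so neither route is obstructed by it.
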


To conclude, we offer infinite families of congruences when $a\in\{2,3,4,5\}$. For convenience, we let
\begin{align}
N_{a}:=\begin{cases}
2^3 & \mbox{if $a=2$},\\
2^{7}3\cdot 5 & \mbox{if $a=3$},\\
2^{10}3^{3}\cdot 5\cdot 7 & \mbox{if $a=4$},\\
2^{15}3^{3} 5^2\cdot 7 & \mbox{if $a=5$}.\nonumber
\end{cases}
\end{align}	

\begin{corollary} \label{natural}
If  $a\in \{2, 3, 4, 5\}, $ then the following are true:

\noindent
(i)  If  $\ell \in \{2, 3, 5, 7\}$ and $p\equiv -1\pmod{\ell^{\ord_\ell(N_{a})+1}}$ is prime, then for every $n$ coprime to $p$ we have
$$
MO(a;pn)\equiv 0\pmod{\ell}.
$$

\noindent
(ii) If $\ell\geq 11$ is prime  and $p\equiv-1\pmod{\ell}$, then for every integer $n$ coprime to $p$ we have
$$
MO(a;pn)\equiv 0\pmod{\ell}.
$$	
\end{corollary}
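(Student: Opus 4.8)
The plan is to reduce the statement to the elementary arithmetic of the divisor sums $\sigma_{2k-1}$, exploiting the crucial restriction $a \le 5$. First I would use Theorem~\ref{ExplicitExpressionUa} to write each $\mathcal{U}_a(q)$ with $a\in\{2,3,4,5\}$ as an explicit polynomial in $E_2,E_4,E_6$, and then repeatedly apply Ramanujan's identities $DE_2=\tfrac1{12}(E_2^2-E_4)$, $DE_4=\tfrac13(E_2E_4-E_6)$, $DE_6=\tfrac12(E_2E_6-E_4^2)$ (where $D:=q\,\tfrac{d}{dq}$) to rewrite $\mathcal{U}_a(q)$ as a $\Q$-linear combination of the iterated derivatives $D^iE_{2k}$. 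The point is that the weight of $\mathcal{U}_a(q)$ is at most $2a\le 10$, and there are no cusp forms on $\SL_2(\Z)$ of weight $\le 10$; by the structure theory of quasimodular forms, the space of quasimodular forms of weight $\le 10$ is therefore spanned entirely by the $D^iE_{2k}$ with $2k\in\{0,2,4,6,8,10\}$. Consequently one obtains an identity of the shape
$$ MO(a;n)=\sum_{(i,k)}\lambda_{i,k}\,n^i\,\sigma_{2k-1}(n), $$
a finite sum with $\lambda_{i,k}\in\Q$ and with no convolution (i.e. non-Eisenstein) terms. Reading off the denominators, I would verify that $N_a$ is a common denominator for the $\lambda_{i,k}$, so that $\ord_\ell(\lambda_{i,k})\ge -\ord_\ell(N_a)$ for every prime $\ell$.

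The heart of the argument is then a one-line divisibility computation. For $n$ coprime to $p$, multiplicativity of $\sigma_{2k-1}$ gives $\sigma_{2k-1}(pn)=(1+p^{2k-1})\sigma_{2k-1}(n)$, whence
$$ MO(a;pn)=\sum_{(i,k)}\lambda_{i,k}\,p^i n^i\,(1+p^{2k-1})\,\sigma_{2k-1}(n). $$
Because every Eisenstein series on $\SL_2(\Z)$ has even weight $2k$, the exponent $2k-1$ is odd; hence if $p\equiv-1\pmod{\ell^{e}}$ then $p^{2k-1}\equiv(-1)^{2k-1}=-1\pmod{\ell^{e}}$, so $1+p^{2k-1}\equiv 0\pmod{\ell^{e}}$. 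For part~(ii), where $\ell\ge 11$ and hence $\ell\nmid N_a$, each $\lambda_{i,k}$ is $\ell$-integral and the single factor $1+p^{2k-1}\equiv0\pmod{\ell}$ forces every summand, and hence $MO(a;pn)$, to vanish modulo $\ell$. For part~(i), where $\ell\in\{2,3,5,7\}$, the hypothesis $p\equiv-1\pmod{\ell^{\ord_\ell(N_a)+1}}$ yields $\ord_\ell(1+p^{2k-1})\ge \ord_\ell(N_a)+1$, which dominates the at-most-$\ord_\ell(N_a)$ powers of $\ell$ in the denominator of $\lambda_{i,k}$; thus $\ord_\ell$ of each summand is at least $1$, and since $MO(a;pn)$ is a rational integer it follows that $MO(a;pn)\equiv 0\pmod{\ell}$.

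The main obstacle is the first step: establishing that $MO(a;n)$ has the ``no convolution'' shape $\sum\lambda_{i,k}n^i\sigma_{2k-1}(n)$. This is exactly where the restriction $a\le 5$ is used, since a weight-$12$ contribution (first possible when $a=6$) would introduce the cusp form $\Delta$, whose Hecke eigenvalue $\tau(p)$ need not vanish modulo $\ell$ when $p\equiv-1\pmod\ell$; the clean factorization $\sigma_{2k-1}(pn)=(1+p^{2k-1})\sigma_{2k-1}(n)$ would then be replaced by the genuinely two-term Hecke recursion, and the divisor-sum argument would break down. Once the Eisenstein-plus-derivatives expansion is in hand, the identification of $N_a$ as the common denominator and the divisibility bookkeeping in part~(i) are routine finite verifications for each $a\in\{2,3,4,5\}$.
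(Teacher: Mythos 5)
Your proposal is correct and follows essentially the same route as the paper: the paper likewise uses Theorem~\ref{ExplicitExpressionUa} to expand each $\mathcal{U}_a(q)$, $a\in\{2,3,4,5\}$, in the form $\frac{1}{N_a}\sum_{n}\bigl[\sum_k P_k(n)\,\sigma_{2k-1}(n)\bigr]q^n$ with integer-coefficient polynomials $P_k$, and then concludes with exactly your divisibility observation, namely $(pn)^t\sigma_{2k-1}(pn)=(pn)^t(1+p^{2k-1})\sigma_{2k-1}(n)\equiv 0$ modulo the relevant power of $\ell$ when $p\equiv-1$ and $n$ is coprime to $p$. The only cosmetic difference is that the paper simply exhibits the four explicit expansions (whose denominators are visibly $N_a$) rather than invoking the structure theory of quasimodular forms and the absence of cusp forms of weight $\le 10$ to justify the ``no convolution'' shape, a point about the cusp form $\Delta$ at $a=6$ that the paper instead makes in a remark.
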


\begin{example} The following congruence is an example of Corollary~\ref{natural} $(i):$
\begin{displaymath}
\begin{split}
MO(2,19^2n+19)&\equiv MO(2,19^2n+38)\equiv MO(2,19^2n+57)\equiv MO(2,19^2n+76) \equiv 0 \pmod{5},\\
\end{split}
\end{displaymath}
As an example of Corollary~\ref{natural} $(ii)$, for  $1\leq t\leq18$, we have
\begin{displaymath}
\begin{split}
MO(a,37^2n+37t)&\equiv0\pmod{19}.\\
\end{split}
\end{displaymath}
\end{example}

\begin{remark}
	Most of the congruences in Theorem~\ref{Lacunarity} do not belong to infinite families such as those in Corollary~\ref{natural}. For instance, if $p\in\{67,101,271,373\}$, then for every non-negative integer $n$
	we have 
	$$M(6;pn)\equiv MO(6;pn)\equiv 0\pmod{17}.$$ 
The coefficients of the expansion of $\mathcal{U}_{6}(q)$ provided by Theorem~\ref{ExplicitExpressionUa} are units modulo 17, and so these congruences follow from the fact that
all of the monomials  $E_2(q)^{\alpha} E_4(q)^{\beta} E_6(q)^{\gamma},$ with $\alpha, \beta, \gamma\geq 0$ and 
$\alpha+2\beta+3\gamma\leq 6$, are annihilated modulo 17 by the Hecke operators $T_p$ for $p\in\{67,101,271,373\}$.
\end{remark}

This paper is organized as follows.  In Section~\ref{Section2}, we recall the Nekrasov-Okounkov hook formulae and relevant  results of Andrews-Rose and Amdeberhan-Andrews-Tauraso, which we then employ to prove
 Theorems~\ref{UaLimit} and~\ref{UaStarLimit} on the limiting behavior of $U_a(q)$ and $U_a^{\star}(q)$. In Section~\ref{Section3} we recall pertinent facts about symmetric functions, as well as results on the quasimodularity of $U_a(q)$, which we then use to  prove Theorems~\ref{ExplicitExpressionUa} and ~\ref{ExplicitExpressionUaStar}.  Finally, in Section 4 we prove Theorems~\ref{Mod11} and \ref{Gordon}, and in Section~\ref{Section5} we prove Theorem~\ref{Lacunarity} using modularity. 
 
\section*{Acknowledgements}
\noindent
 The second author thanks the Thomas Jefferson Fund and the NSF
(DMS-2002265 and DMS-2055118). The third author is grateful for the support of a Fulbright Nehru Postdoctoral Fellowship.

\section{Proofs of Theorems~\ref{UaLimit} and \ref{UaStarLimit}}\label{Section2}

Here we prove Theorems~\ref{UaLimit} and ~\ref{UaStarLimit} using earlier work of Nekrasov-Okounkov and Andrews-Rose.

\subsection{Proof of Theorem~\ref{UaLimit}}

We require a beautiful identity of Andrews-Rose for $\mathcal{U}_a(q)$.

\begin{lemma}\cite[Cor. 2]{Andrews-Rose} \label{newgen_for_U} If $a$ is a positive integer, then as formal power series we have that
$$\mathcal{U}_a(q)\cdot \prod_{n\geq1}(1-q^n)^3=\frac{(-1)^a}{(2a+1)!}\,\sum_{n\geq0}(-1)^n(2n+1)\frac{(n+a)!}{(n-a)!}\, q^{\frac{n(n+1)}2}.$$
\end{lemma}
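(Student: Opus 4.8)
The plan is to handle the whole family $\{\mathcal{U}_a(q)\}_{a\ge 0}$ at once by packaging it into a generating function in an auxiliary variable $x$. Expanding MacMahon's product over $k$ termwise gives the formal identity
$$\sum_{a\ge0}\mathcal{U}_a(q)\,x^a=\prod_{k\ge1}\left(1+\frac{x\,q^k}{(1-q^k)^2}\right),$$
which is legitimate since $\mathcal{U}_a(q)$ has $q$-order $\binom{a+1}{2}\to\infty$. The decisive move is the substitution $x=2-t-t^{-1}$, which completes each local factor into a product of linear terms:
$$1+\frac{x\,q^k}{(1-q^k)^2}=\frac{(1-q^k)^2+x\,q^k}{(1-q^k)^2}=\frac{(1-tq^k)(1-t^{-1}q^k)}{(1-q^k)^2}.$$
Multiplying through by $\prod_{k\ge1}(1-q^k)^3$ then clears all denominators and collapses the problem to a triple product:
$$\left(\sum_{a\ge0}\mathcal{U}_a(q)\,x^a\right)\prod_{k\ge1}(1-q^k)^3=\prod_{k\ge1}(1-q^k)(1-tq^k)(1-t^{-1}q^k)=:P(t,q).$$
Everything now reduces to evaluating $P(t,q)$ and expanding it as a power series in $x=2-t-t^{-1}$.

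Next I would evaluate $P(t,q)$ using the Jacobi triple product. After moving the factor $\prod_{k\ge1}(1-t^{-1}q^k)$ to the standard range (at the cost of a factor $1/(1-t^{-1})$), the triple product yields $\prod_{k\ge1}(1-q^k)(1-tq^k)(1-t^{-1}q^{k-1})=\sum_{n\in\Z}(-1)^n t^n q^{n(n+1)/2}$, hence $P(t,q)=\frac{1}{1-t^{-1}}\sum_{n\in\Z}(-1)^n t^n q^{n(n+1)/2}$. Since the exponents $n(n+1)/2$ are invariant under $n\mapsto -n-1$, pairing these two indices and simplifying the resulting geometric factor turns this into the manifestly $t\leftrightarrow t^{-1}$ symmetric form
$$P(t,q)=\sum_{n\ge0}(-1)^n q^{n(n+1)/2}\,s_n(t),\qquad s_n(t):=\sum_{i=-n}^{n}t^{i}.$$
Each $s_n(t)$ is a symmetric Laurent polynomial, hence a genuine polynomial of degree $n$ in $x=2-t-t^{-1}$, which is precisely the shape needed to read off the coefficient of $x^a$. (As a consistency check, the case $a=0$ recovers Jacobi's classical identity $\prod(1-q^n)^3=\sum_{n\ge0}(-1)^n(2n+1)q^{n(n+1)/2}$.)

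Finally, equating coefficients of $x^a$ on both sides reduces the Lemma to the purely combinatorial claim $[x^a]\,s_n(t)=(-1)^a\frac{2n+1}{2a+1}\binom{n+a}{2a}$, because $\frac{2n+1}{2a+1}\binom{n+a}{2a}=\frac{2n+1}{(2a+1)!}\cdot\frac{(n+a)!}{(n-a)!}$ reproduces exactly the right-hand side of the Lemma (and vanishes for $n<a$, matching $\deg_x s_n=n$). I would prove this expansion by summing over $n$: a direct computation gives the rational generating function $\sum_{n\ge0}s_n(t)\,w^n=\frac{1+w}{(1-w)^2+x\,w}$, so extracting $[x^a]$ produces $\sum_{n\ge0}\big([x^a]s_n\big)w^n=(-1)^a\frac{(1+w)\,w^a}{(1-w)^{2a+2}}$, and reading off the coefficient of $w^{n-a}$ followed by a single application of Pascal's rule gives the claimed closed form. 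The only genuinely delicate points are bookkeeping ones: pinning down the exact Jacobi triple product normalization and the $n\mapsto -n-1$ symmetrization, and confirming that the substitution $x=2-t-t^{-1}$ together with coefficient extraction in $x$ is valid as an identity of formal power series (it is, since for each fixed power of $q$ only finitely many $n$ contribute and each $s_n$ is a polynomial in $x$). Everything past the triple product is routine binomial algebra.
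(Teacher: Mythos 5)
Your proof is correct, but note that the paper does not actually prove Lemma~\ref{newgen_for_U}: it is quoted verbatim from \cite[Cor.~2]{Andrews-Rose}, so there is no internal proof to compare against. What you have written is a complete, self-contained derivation, and it is in essence a reconstruction of the Andrews--Rose argument itself (their paper's title advertises exactly the Chebyshev-type polynomials that your $s_n(t)=\sum_{i=-n}^{n}t^i$ are in disguise). Each step checks out: the generating function $\sum_{a\geq 0}\mathcal{U}_a(q)x^a=\prod_{k\geq1}\bigl(1+\tfrac{xq^k}{(1-q^k)^2}\bigr)$ follows from \eqref{Mac1}; the factorization under $x=2-t-t^{-1}$ is right; the triple-product normalization $\prod_{k\geq1}(1-q^k)(1-tq^k)(1-t^{-1}q^{k-1})=\sum_{n\in\Z}(-1)^nt^nq^{n(n+1)/2}$ is the standard one; the pairing $n\leftrightarrow -n-1$ does produce $(-1)^n(t^n-t^{-n-1})/(1-t^{-1})=(-1)^ns_n(t)$; and I verified both the rational generating function $\sum_{n\geq0}s_n(t)w^n=\tfrac{1+w}{(1-w)^2+xw}$ and the resulting identity $\binom{n+a+1}{2a+1}+\binom{n+a}{2a+1}=\tfrac{2n+1}{2a+1}\binom{n+a}{2a}$, which is exactly the coefficient in the Lemma. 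It is worth observing that your two key ingredients are not foreign to the paper: the identity \eqref{GoodFormula} is your $x$-generating function with $x=4\sin^2 x$ (i.e.\ $t=e^{2ix}$), and the computation \eqref{proEta} is your Jacobi triple product step; the paper deploys them in Section~3 to prove Theorem~\ref{ExplicitExpressionUa} rather than to reprove the Andrews--Rose lemma. So your route is the natural one, and the only caveats are the bookkeeping points you already flagged (the division by $1-t^{-1}$ should be justified by pairing terms \emph{before} dividing, so that one never leaves the ring of Laurent polynomials in $t$ over formal $q$-series, and coefficient extraction in $x$ is licensed because $x\mapsto 2-t-t^{-1}$ is an isomorphism onto symmetric Laurent polynomials).
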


\noindent
We also require the celebrated Nekrasov-Okounkov hook length identity (see (6.12) on page 569 of \cite{NekOk}; see also Th.~1.3 of \cite{Han}).

\begin{theorem}\label{NOIdentity} As a formal power series, we have
$$\prod_{j\geq1}\frac1{(1-q^j)^{z+1}}=\sum_{m\geq0}q^m\, \sum_{\lambda\vdash m}
\prod_{h\in\mathcal{H}(\lambda)}\left(\frac{z}{h^2}+1\right).$$
\end{theorem}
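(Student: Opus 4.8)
The plan is to reduce the claimed power-series identity to a family of polynomial identities in the variable $z$ — one for each power of $q$ — and then verify those at infinitely many special values of $z$. Extracting the coefficient of $q^m$ from the left-hand side via the expansion $(1-q^j)^{-(z+1)}=\sum_{k\geq0}\binom{z+k}{k}q^{jk}$ (where each $\binom{z+k}{k}$ has degree $k$ in $z$) shows that this coefficient is a polynomial in $z$ of degree $m$, the top-degree term $\binom{z+m}{m}$ arising from the single monomial $q^m$ in $(1-q)^{-(z+1)}$. On the right-hand side, every $\lambda\vdash m$ has exactly $m$ hooks, so $\prod_{h\in\mathcal{H}(\lambda)}(z/h^2+1)$ is a polynomial in $z$ of degree $m$ as well. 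Hence for each fixed $m$ both sides are degree-$m$ polynomials in $z$, and it suffices to prove equality at infinitely many values of $z$.

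The natural choices are $z=-t^2$ for $t=1,2,3,\dots$, since then the factor $z/h^2+1=1-t^2/h^2$ vanishes exactly when $h=t$. Consequently a partition contributes a nonzero term only if it has no hook of length $t$, i.e.\ only if it is a $t$-core; and since a $t$-core has no hook length divisible by $t$ at all, its surviving factors $(h^2-t^2)/h^2$ are all nonzero. Thus at $z=-t^2$ the right-hand side collapses to $\sum_{\lambda\ t\text{-core}}q^{|\lambda|}\prod_{h\in\mathcal{H}(\lambda)}\frac{h^2-t^2}{h^2}$, while the left-hand side becomes $\prod_{j\geq1}(1-q^j)^{t^2-1}$. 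I would recognize the resulting equation $\prod_{j\geq1}(1-q^j)^{t^2-1}=\sum_{\lambda\ t\text{-core}}q^{|\lambda|}\prod_{h}\frac{h^2-t^2}{h^2}$ as a form of the Macdonald identity for the affine root system $A_{t-1}^{(1)}$ (for $t=2$ it is Jacobi's expansion of $\eta^3$, which one checks directly: $\delta_k=(k,\dots,1)$ reproduces the coefficients $(-1)^k(2k+1)q^{k(k+1)/2}$). To establish it in general I would parametrize $t$-cores by their Littlewood ($t$-core/$t$-quotient) data — equivalently by charge vectors $\vec n\in\mathbb{Z}^t$ with $\sum n_i=0$ — under which $|\lambda|$ becomes an explicit quadratic form and the hook product reorganizes into the Vandermonde-type weight appearing in Macdonald's identity.

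The \emph{main obstacle} is precisely this last step: the specialized $t$-core identity carries the substantive combinatorial content, whereas the polynomiality-plus-interpolation scaffolding is routine. One must either invoke Macdonald's affine-$A$ identity directly or carry out the Littlewood decomposition carefully, tracking how the hook lengths of $\lambda$ coprime to $t$ reorganize under the bijection so that $\prod_h(h^2-t^2)/h^2$ matches the quadratic exponent and the Weyl-denominator weights. A secondary point requiring care is the bookkeeping that the surviving partitions at $z=-t^2$ are \emph{exactly} the $t$-cores — using that a hook length divisible by $t$ forces a hook length equal to $t$, so no extraneous terms sneak in. With that in hand, agreement at the infinitely many points $z=-t^2$ upgrades each coefficient identity to a full polynomial identity in $z$, and summing over $m$ yields the theorem for all $z$.
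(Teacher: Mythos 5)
There is nothing in the paper to compare your argument against: the paper does not prove this theorem, but imports it verbatim from Nekrasov--Okounkov \cite{NekOk} (identity (6.12)) and Han \cite{Han}. So the only question is whether your blind argument is sound, and it is --- indeed it is a reconstruction of the standard published proof (essentially Han's). Your scaffolding is correct in every checkable detail: the coefficient of $q^m$ on the left is a polynomial in $z$ of degree exactly $m$ with top term $\binom{z+m}{m}$ (since $\sum_j k_j = \sum_j jk_j = m$ forces all parts equal to $1$); each $\lambda\vdash m$ has exactly $m$ hooks, so the right side's coefficient is also a degree-$m$ polynomial; agreement at the infinitely many points $z=-t^2$ therefore suffices; at $z=-t^2$ the factor $z/h^2+1$ vanishes precisely at $h=t$, so the surviving partitions are exactly the $t$-cores and the identity becomes $\prod_{j\geq1}(1-q^j)^{t^2-1}=\sum_{\lambda}q^{|\lambda|}\prod_{h\in\mathcal{H}(\lambda)}(h^2-t^2)/h^2$ summed over $t$-cores; and your $t=2$ check against Jacobi's expansion of $\eta^3$ is right (e.g.\ $\lambda=(2,1)$ has hooks $\{3,1,1\}$, giving $(1-\tfrac49)(1-4)(1-4)=5=(-1)^2(2\cdot 2+1)$). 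The one caveat is the step you yourself flag as the main obstacle: passing from Macdonald's $A_{t-1}^{(1)}$ identity in its usual lattice-sum (Weyl-denominator) form to the $t$-core hook-product form is not mere bookkeeping but a genuine lemma. One needs the Littlewood/Garvan--Kim--Stanton bijection sending a $t$-core to an integer vector $\vec v$ with fixed residues and sum, under which $|\lambda|$ becomes the stated quadratic form and --- this is the crux --- $\prod_{h\in\mathcal{H}(\lambda)}(1-t^2/h^2)$ evaluates to the Vandermonde product $\prod_{i<j}(v_i-v_j)$ normalized by factorials; that evaluation is where the hook-length content of the theorem actually lives. Since both steps are classical and citable (the bijection from Garvan--Kim--Stanton, the translation carried out in \cite{Han}), your proof is complete modulo those citations; it would be incomplete only under the stricter reading in which ``invoke Macdonald'' means citing just the lattice-sum identity with no translation lemma supplied.
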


\begin{proof}[Proof of Theorem~\ref{UaLimit}]
Thanks to Lemma~\ref{newgen_for_U} for $\mathcal{U}_a(q),$ we find that
\begin{align*} q^{-\binom{a+1}2}\cdot \mathcal{U}_a(q)\cdot \prod_{n\geq1}(1-q^n)^3
&=\sum_{j\geq0}(-1)^j\cdot \frac{2j+2a+1}{2a+1}\cdot \binom{j+2a}j\,q^{aj+\binom{j+1}2} \\
&=1-(2a+3)q^{a+1}+ (a+1)(2a+5)q^{2a+3}+\cdots \,\, . 
\end{align*}
Claim $(i)$ follows immediately.

The first formula in $(ii)$ follows by letting $z=2$ in Theorem ~\ref{NOIdentity}, giving
$$\prod_{n\geq1}\frac1{(1-q^n)^3}=\sum_{m\geq0}q^m\, \sum_{\lambda\vdash m} \,\prod_{h\in\mathcal{H}(\lambda)}\left(\frac{2}{h^2}+1\right),$$
while the other claim arises from the interpretation of the $q$-product in terms of  3-colored partitions.
\end{proof}

\subsection{Proof of Theorem~\ref{UaStarLimit}}

Amdeberhan-Andrews-Tauraso express $\mathcal{U}_a^{\star}(q)$ as  a single
sum.

\begin{lemma}\label{AAT-prop}{\text {\rm \cite[Prop. 4.1]{AAT}}} We have the identity
$$
\mathcal{U}_a^*(q)=\sum_{k\geq1}(-1)^{k-1}\,\frac{(1+q^k)\,q^{\binom{k}2+ak}}{(1-q^k)^{2a}}.$$
\end{lemma}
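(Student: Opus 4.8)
We want to show that the multiple sum defining $\mathcal{U}_a^*(q)$ collapses into the single alternating sum
$$
\sum_{k\geq1}(-1)^{k-1}\,\frac{(1+q^k)\,q^{\binom{k}2+ak}}{(1-q^k)^{2a}}.
$$
My overall strategy is to treat $\mathcal{U}_a^*(q)$ as the weighted generating function for divisor-type data and to reorganize the weakly-increasing sum $1\le k_1\le\cdots\le k_a$ by collecting equal indices; the largest index $k_a=k$ should emerge as the summation variable, and everything beneath it should telescope.

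First I would rewrite each factor $\tfrac{q^{k_i}}{(1-q^{k_i})^2}=\sum_{s_i\ge1}s_i\,q^{s_i k_i}$, so that
$$
\mathcal{U}_a^*(q)=\sum_{1\le k_1\le\cdots\le k_a}\ \prod_{i=1}^a\sum_{s_i\ge1}s_i\,q^{s_i k_i}.
$$
This exhibits $M(a;n)=\sum s_1\cdots s_a$ over the relevant partition data, matching the combinatorial description in the introduction. I then want to isolate the top variable $k_a$. The cleanest route is induction on $a$: defining the partial generating function with top index bounded by $k$, I would seek a recursion in $a$ relating the $a$-fold sum to the $(a-1)$-fold sum. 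An alternative—and I suspect the route Amdeberhan--Andrews--Tauraso actually take—is to recognize the right-hand side as a residue/partial-fraction identity: the factor $(1+q^k)/(1-q^k)^{2a}$ with the alternating sign and the quadratic exponent $\binom{k}2+ak$ is exactly what one gets from a telescoping of nested sums, analogous to how $\sum_{1\le k_1\le\cdots\le k_a}$ of a product of $\tfrac{x_i}{(1-x_i)^2}$-type factors contracts.

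A concrete mechanism I would pursue: prove the identity by showing both sides satisfy the same first-order recursion in $a$. Writing $L_a(q)$ for the left-hand multiple sum and $R_a(q)$ for the proposed right-hand side, I would split $L_a$ according to whether $k_{a-1}<k_a$ or $k_{a-1}=k_a$, obtaining a relation expressing $L_a$ in terms of $L_{a-1}$ with an extra $\tfrac{q^{k}}{(1-q^k)^2}$ weight summed over $k\ge k_{a-1}$. On the right, I would verify the matching recursion directly from the closed form, using the telescoping structure of $(-1)^{k-1}(1+q^k)q^{\binom{k}2}$ against the geometric-type sums. The base case $a=1$ is immediate, since $\mathcal{U}_1^*(q)=\sum_{k\ge1}\tfrac{q^k}{(1-q^k)^2}=\sum_{k\ge1}(-1)^{0}(1+q^k)q^{\binom12+k}/(1-q^k)^2$ after noting $\binom12=0$, i.e. it reduces to the standard Lambert-series identity $\sum_k q^k/(1-q^k)^2=\sum_n \sigma_1(n)q^n$.

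The main obstacle I anticipate is the bookkeeping in the inductive step, specifically the summation over the top index that produces the quadratic exponent $\binom{k}2+ak$ and the factor $(1+q^k)$. Getting the alternating sign $(-1)^{k-1}$ to emerge requires a genuine telescoping (or an Abel-summation/partial-fraction argument) rather than a naive geometric-series evaluation, and tracking how the exponent $\binom{k-1}2+(a-1)(k-1)$ at level $a-1$ shifts to $\binom{k}2+ak$ at level $a$ is where sign and exponent errors are most likely. If the direct recursion proves unwieldy, the fallback is to cite the partial-fraction decomposition of $\prod_{i}(1-q^{k_i})^{-2}$ over the simplex $k_1\le\cdots\le k_a$ and evaluate the resulting residues, which is the standard technique for such multiple $q$-zeta values; but since this is stated as a known result \cite[Prop.~4.1]{AAT}, I would ultimately defer the finer telescoping details to that reference and present the inductive skeleton above as the conceptual proof.
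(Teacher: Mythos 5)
The paper offers no proof of this lemma at all: it is quoted directly from \cite[Prop.~4.1]{AAT} and used as an input to the proof of Theorem~\ref{UaStarLimit}. So the only part of your proposal that matches the paper's treatment is the final sentence, where you defer to the reference. The ``conceptual proof'' you add on top of that citation, however, has two genuine gaps, one of which is an outright error. Your base case is wrong as written: at $a=1$ the right-hand side is
\begin{equation*}
\sum_{k\geq 1}(-1)^{k-1}\,\frac{(1+q^k)\,q^{\binom{k}{2}+k}}{(1-q^k)^2},
\end{equation*}
where the sign $(-1)^{k-1}$ and the exponent $\binom{k}{2}$ depend on the summation index $k$; you replaced them by $(-1)^{0}$ and $\binom{1}{2}=0$, i.e.\ you substituted $k=1$ into part of the summand while keeping the sum over all $k$. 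The two series are not equal term by term (the $k=2$ term of the right-hand side is $-(1+q^2)q^3/(1-q^2)^2$, which is negative, while every term of $\sum_k q^k/(1-q^k)^2$ is positive), so this is emphatically not ``the standard Lambert-series identity'': the equality of the two sums is already a nontrivial theta-type identity --- essentially the $a=1$ case of the lemma itself, classically obtained from the partial-fraction expansion of $(q;q)_\infty^2/\bigl((zq;q)_\infty (q/z;q)_\infty\bigr)$ or an Abel-summation argument.

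The second gap is that the inductive step, which is the entire content of the lemma, is never made precise. Splitting the weakly increasing sum according to $k_{a-1}<k_a$ versus $k_{a-1}=k_a$ does not express $\mathcal{U}_a^{\star}$ in terms of $\mathcal{U}_{a-1}^{\star}$: when the top two indices coincide the combined factor is $q^{2k}/(1-q^k)^4$, which is not of the form appearing in $\mathcal{U}_{a-1}^{\star}$, so one must track the full multiplicity of the largest part (or, equivalently, closed forms for the partial sums over the top index), and that is exactly where the quadratic exponent $\binom{k}{2}+ak$ and the alternating sign would have to be produced; no matching recursion for the right-hand side is ever derived or verified. If you want a self-contained proof rather than the citation, the workable route is the one you relegate to a fallback: form the generating function $\sum_{a\geq 0}\mathcal{U}_a^{\star}(q)\,z^a=\prod_{k\geq 1}\bigl(1-zq^k/(1-q^k)^2\bigr)^{-1}$, substitute $z=w+w^{-1}-2$ so each denominator factors as $(1-wq^k)(1-q^k/w)/(1-q^k)^2$, and apply the crank-type partial-fraction identity to the resulting infinite product; extracting the coefficient of $z^a$ then yields the stated single sum. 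As it stands, your proposal is a plan whose verified content (the base case) is incorrect and whose unverified content (the recursion) is the whole theorem.
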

\begin{proof}[Proof of Theorem~\ref{UaStarLimit}] The expansion $(1-q^k)^{-2a}=\sum_{m\geq0}\binom{2a+m-1}m\,q^{km}$ and Lemma~\ref{AAT-prop} imply that
\begin{align*}
q^{-a}\cdot \mathcal{U}_a^{\star}(q)
&=\sum_{k\geq1}(-1)^{k-1}\,\frac{q^{\binom{k}2+a(k-1)}}{(1-q^k)^{2a}} + \sum_{k\geq1}(-1)^{k-1}\,\frac{q^{\binom{k+1}2+a(k-1)}}{(1-q^k)^{2a}} \\
&=\sum_{k\geq1}\sum_{m\geq0} (-1)^{k-1}\binom{2a+m-1}m\,q^{km+\binom{k}2+a(k-1)}+
\sum_{k\geq1}\sum_{m\geq0} \binom{2a+m-1}m\, q^{km+\binom{k+1}2+a(k-1)}.
\end{align*}
We compare coefficients of $q^n$ for $n\leq a$. Namely, in the double sums we require $km+\binom{k}2+a(k-1)\leq a$ and $km+\binom{k+1}2+a(k-1)\leq a$. The former results in $k=1, m=n$ and the latter forces $k=1, m=n-1$. Consequently, if we let $q^{-a}\cdot \mathcal{U}_a^{\star}(q)=:\sum_{n\geq0}p_n(a)\,q^n$, then we find that
$$p_n(a)=\binom{2a+n-1}n+\binom{2a+n-2}{n-1}.$$
\end{proof}

\section{Proof of Theorems~\ref{ExplicitExpressionUa} and \ref{ExplicitExpressionUaStar}}\label{Section3}

Here we prove the explicit descriptions of $\mathcal{U}_a(q)$ and $\mathcal{U}_a^*(q)$ in terms of Eisenstein series.

\subsection{Nuts and Bolts}

We make use of the differential operator $\Theta:=q\frac{d}{dq}$, which acts by
\begin{equation}
\Theta\left(\sum a(n)q^n\right):=\sum na(n)q^n.
\end{equation}
Ramanujan famously obtained the following formulas \cite[p. 181]{Rama} for the action of $\Theta$:
\begin{align} \label{rama}
\Theta (E_2(q))&=\frac{E_2^2(q)-E_4(q)}{12}, \qquad \Theta(E_4(q))=\frac{E_2(q)E_4(q)-E_6(q)}3,  \\
 \Theta (E_6(q))&=\frac{E_2(q)E_6(q)-E_4^2(q)}2. \nonumber
\end{align}

The $q$-series $\mathcal{U}_a(q)$ and $\mathcal{U}_a^{\star}(q)$ satisfy the following convenient convolution (see \cite[p. 13]{AAT}).

\begin{lemma} \label{ele-hom} If $a$ is a positive integer, then we have that
$$\sum_{i=0}^a (-1)^i\cdot \mathcal{U}_i(q)\cdot \mathcal{U}_{a-i}^{\star}(q)=0.$$
\end{lemma}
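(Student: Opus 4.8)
The plan is to recognize this convolution as the classical relation between the elementary and complete homogeneous symmetric functions. First I would set $x_k := q^k/(1-q^k)^2$ for each integer $k\geq 1$, and observe that $x_k$ is a power series in $q$ of order exactly $k$, i.e. $x_k\in q^k\,\mathbb{Q}[[q]]$. Comparing with the defining series \eqref{Mac1} and \eqref{Mac2}, one sees that $\mathcal{U}_a(q)$ is precisely the $a$-th elementary symmetric function $e_a$ in the variables $\{x_k\}_{k\geq1}$ (the strict inequalities $k_1<\cdots<k_a$ forbid repeats), while $\mathcal{U}_a^{\star}(q)$ is the $a$-th complete homogeneous symmetric function $h_a$ (the weak inequalities allow repeats). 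Here $\mathcal{U}_0(q)=\mathcal{U}_0^{\star}(q)=1$.

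The key step is to pass to generating functions in an auxiliary variable $t$. I would establish the two identities in $\mathbb{Q}[[q]][[t]]$
$$
\sum_{a\geq0}\mathcal{U}_a(q)\,t^a=\prod_{k\geq1}\bigl(1+t\,x_k\bigr),
\qquad
\sum_{a\geq0}\mathcal{U}_a^{\star}(q)\,t^a=\prod_{k\geq1}\frac{1}{1-t\,x_k},
$$
which follow by expanding each factor and collecting terms according to the number of chosen indices, reproducing exactly the strict (respectively weak) multi-indexed sums. Replacing $t$ by $-t$ in the first identity gives $\sum_{a\geq0}(-1)^a\mathcal{U}_a(q)\,t^a=\prod_{k\geq1}(1-t\,x_k)$, and multiplying this against the second yields
$$
\left(\sum_{i\geq0}(-1)^i\mathcal{U}_i(q)\,t^i\right)\!\left(\sum_{j\geq0}\mathcal{U}_j^{\star}(q)\,t^j\right)
=\prod_{k\geq1}(1-t\,x_k)\prod_{k\geq1}\frac{1}{1-t\,x_k}=1.
$$
Extracting the coefficient of $t^a$ on both sides for each fixed $a\geq1$ gives $\sum_{i=0}^a(-1)^i\,\mathcal{U}_i(q)\,\mathcal{U}_{a-i}^{\star}(q)=0$, which is the claim.

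There is no serious obstacle here; the identity $E(-t)H(t)=1$ is the standard symmetric-function relation. The only point requiring care is the legitimacy of manipulating products over the infinitely many variables $x_k$. This is handled by the order observation above: since $x_k$ has $q$-order $k$, the coefficient of any fixed monomial $t^a q^n$ in each infinite product receives contributions only from factors with $k\leq n$, so the products converge $q$-adically and define genuine elements of $\mathbb{Q}[[q]][[t]]$. Thus all the formal rearrangements, and in particular the telescoping cancellation in the final product, are justified term by term.
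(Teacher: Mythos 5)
Your proof is correct, and it is essentially the intended argument: the paper itself does not prove this lemma but cites it from \cite[p. 13]{AAT}, where it arises exactly as you describe, from viewing $\mathcal{U}_a(q)$ and $\mathcal{U}_a^{\star}(q)$ as the elementary and complete homogeneous symmetric functions $e_a$ and $h_a$ in the variables $x_k=q^k/(1-q^k)^2$ and invoking the classical identity $E(-t)H(t)=1$ (indeed, the paper's internal label for the lemma, \texttt{ele-hom}, refers to precisely this elementary/homogeneous relation). Your attention to the $q$-adic convergence of the infinite products, via the observation that $x_k$ has order $k$, fully justifies the formal manipulations, so nothing is missing.
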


Recall the Dedekind eta-function
$\eta(q)=q^{\frac1{24}}\prod_{m=1}^{\infty}(1-q^m).$
 The following result of Rose \cite[Th. 1.12]{rose}  describes the structural framework of  $\mathcal{U}_a(q)$ in terms of iterated derivatives of $\eta(q)^3.$

\begin{theorem} \label{weight} Each $\mathcal{U}_a(q)$ is a finite sum of quasimodular forms with weight $\leq 2a$ on $\SL_2(\mathbb{Z})$. Moreover, the weight $2t$ summand is a (possibly zero) scalar multiple of 
$$
2^t\cdot\frac{\Theta^t\left(\eta(q)^3\right)}{\eta(q)^3}.
$$
\end{theorem}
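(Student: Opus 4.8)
The plan is to convert the single-sum expression for $\mathcal{U}_a(q)$ in Lemma~\ref{newgen_for_U} into one in which $\eta(q)^3$ and the operator $\Theta$ appear explicitly. First I would record Jacobi's identity $\prod_{n\geq 1}(1-q^n)^3=\sum_{n\geq 0}(-1)^n(2n+1)q^{n(n+1)/2}$, which after accounting for the factor $q^{1/8}$ gives $\eta(q)^3=\sum_{n\geq 0}(-1)^n(2n+1)q^{(2n+1)^2/8}$. Multiplying the identity of Lemma~\ref{newgen_for_U} by $q^{1/8}$ then yields
$$
\mathcal{U}_a(q)\cdot \eta(q)^3=\frac{(-1)^a}{(2a+1)!}\sum_{n\geq 0}(-1)^n(2n+1)\,\frac{(n+a)!}{(n-a)!}\, q^{(2n+1)^2/8}.
$$

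The crucial observation is that, writing $m=2n+1$, the product $\frac{(n+a)!}{(n-a)!}=(n+a)(n+a-1)\cdots(n-a+1)$ equals $2^{-2a}\prod_{i=1}^a\bigl(m^2-(2i-1)^2\bigr)$, an \emph{even} polynomial in $m$ of degree $2a$, hence a polynomial $P_a$ of degree $a$ in $m^2=(2n+1)^2$. Next I would use that $\Theta$ acts on the term $q^{(2n+1)^2/8}$ with eigenvalue $(2n+1)^2/8$, so that the substitution $(2n+1)^2\mapsto 8\,\Theta$ turns the sum into a polynomial in $\Theta$ applied to $\eta(q)^3$. Expanding $P_a$ in the monomials $\bigl((2n+1)^2\bigr)^t$ for $0\leq t\leq a$ and dividing by $\eta(q)^3$ therefore produces
$$
\mathcal{U}_a(q)=\sum_{t=0}^a \kappa_{a,t}\cdot \frac{\Theta^t\bigl(\eta(q)^3\bigr)}{\eta(q)^3}
$$
for explicit scalars $\kappa_{a,t}$, the top index being $t=a$ because $\deg P_a=a$.

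It remains to show that each $g_t:=\Theta^t(\eta^3)/\eta^3$ is quasimodular of \emph{pure} weight $2t$. I would start from the logarithmic-derivative identity $\Theta(\eta^3)/\eta^3=3\,\Theta(\eta)/\eta=\tfrac18 E_2(q)$, the weight-$2$ case, and proceed by induction: writing $\Theta^t(\eta^3)=\Theta(g_{t-1}\eta^3)$ and using the product rule gives $g_t=\Theta(g_{t-1})+\tfrac18 E_2(q)\,g_{t-1}$. Since Ramanujan's formulas~(\ref{rama}) show that $\Theta$ is a weight-raising-by-$2$ derivation on the ring $\mathbb C[E_2,E_4,E_6]$, both summands are quasimodular of weight $2t$ whenever $g_{t-1}$ has weight $2(t-1)$, closing the induction. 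Because quasimodular forms of distinct weights are linearly independent, the displayed expression is precisely the weight decomposition of $\mathcal{U}_a(q)$: its weight-$2t$ summand is $\kappa_{a,t}\,g_t=\bigl(2^{-t}\kappa_{a,t}\bigr)\cdot 2^t g_t$, a scalar multiple of $2^t\,\Theta^t(\eta^3)/\eta^3$, and all weights are $\leq 2a$.

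The main obstacle I anticipate is the bookkeeping in the second step: correctly identifying the evenness of $\frac{(n+a)!}{(n-a)!}$ in $m=2n+1$, establishing its factorization with roots at the odd squares $(2i-1)^2$, and then cleanly translating the resulting polynomial in $(2n+1)^2$ into a polynomial in $\Theta$ acting on $\eta^3$. Once this dictionary between ``multiplication by $(2n+1)^2/8$ in the Jacobi expansion'' and ``application of $\Theta$'' is set up carefully, the quasimodularity and weight claims follow formally from the single identity $\Theta(\eta^3)/\eta^3=\tfrac18 E_2$ together with~(\ref{rama}).
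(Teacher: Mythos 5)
Your proof is correct, but there is no internal proof to compare it against: the paper states Theorem~\ref{weight} as an imported result of Rose \cite[Th.~1.12]{rose} and gives no argument of its own. What you have written is a self-contained derivation from Lemma~\ref{newgen_for_U} (which the paper does state), and it is in essence Rose's original argument. The key computations all check: multiplying the Andrews--Rose identity by $q^{1/8}$ converts the product side into $\eta(q)^3$ and the exponents into $(2n+1)^2/8$; the factorization $\frac{(n+a)!}{(n-a)!}=2^{-2a}\prod_{i=1}^{a}\bigl((2n+1)^2-(2i-1)^2\bigr)$ is a correct identity, and it automatically explains the vanishing of the terms with $n<a$ (the factor with $i=n+1$ is zero); since $\Theta\, q^{(2n+1)^2/8}=\tfrac{(2n+1)^2}{8}\,q^{(2n+1)^2/8}$, the substitution of $8\Theta$ for $(2n+1)^2$ is legitimate term by term (one should momentarily work with formal series in $q^{1/8}$, noting that the final quotients $\Theta^t(\eta^3)/\eta^3$ have integer exponents). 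Your induction $g_t=\Theta(g_{t-1})+\tfrac18E_2\,g_{t-1}$, resting on $\Theta(\eta^3)=\tfrac18E_2\,\eta^3$ and on the fact that Ramanujan's formulas \eqref{rama} make $\Theta$ a weight-raising-by-two derivation of $\mathbb{C}[E_2,E_4,E_6]$, correctly shows each $\Theta^t(\eta^3)/\eta^3$ is homogeneous of weight $2t$; these are precisely the two facts the paper itself invokes inside the proof of Lemma~\ref{preThm1.1}, where Theorem~\ref{weight} gets used. The final appeal to uniqueness of the weight decomposition (algebraic independence of $E_2,E_4,E_6$) correctly identifies $\kappa_{a,t}\,\Theta^t(\eta^3)/\eta^3$ as the weight-$2t$ summand, and the normalizing factor $2^t$ in the statement is immaterial since any scalar multiple of $\Theta^t(\eta^3)/\eta^3$ is a scalar multiple of $2^t\Theta^t(\eta^3)/\eta^3$. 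Beyond making Section~3 self-contained modulo the Andrews--Rose identity, your route even yields something the bare citation does not assert: the top summand of weight $2a$ is nonzero, since $P_a$ has leading coefficient $2^{-2a}$, giving $\kappa_{a,a}=(-1)^a2^a/(2a+1)!\neq 0$.
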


Our next result expresses these $q$-series  as a linear combination of monomials $E_2(q)^{\alpha} E_4(q)^{\beta} E_6(q)^{\gamma}$.

\begin{lemma} \label{preThm1.1} If $t$ is a positive integer, then we have that 
$$
(-8)^t\cdot\frac{\Theta^t\left(\eta(q)^3\right)}{\eta(q)^3}
=\sum_{\substack {\alpha, \beta, \gamma\geq 0\\ \alpha+2\beta+3\gamma=t}}
c(\alpha,\beta,\gamma) \cdot E_2(q)^{\alpha} E_4(q)^{\beta} E_6(q)^{\gamma}$$
where the coefficients $c(\alpha.\beta,\gamma)$ are defined by \eqref{coefficients}.
\end{lemma}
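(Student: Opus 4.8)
The plan is to prove the identity by induction on $t$, after first converting the left-hand side into a clean first-order recursion in $t$ and then reading off the coefficient recursion \eqref{coefficients} from Ramanujan's formulas \eqref{rama}.

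Write $f:=\eta(q)^3$ and $g_t:=\Theta^t(f)/f$, so that the assertion is that $G_t:=(-8)^t g_t$ equals $\sum_{\alpha+2\beta+3\gamma=t}c(\alpha,\beta,\gamma)E_2^\alpha E_4^\beta E_6^\gamma$. The one preliminary input is the logarithmic derivative of $\eta$: from $\eta=q^{1/24}\prod_{m\ge1}(1-q^m)$ and $\sum_{m\ge1}mq^m/(1-q^m)=\sum_{n\ge1}\sigma_1(n)q^n$ one gets $\Theta\log\eta=E_2/24$, hence $g_1=\Theta f/f=E_2/8$. Applying $\Theta$ to the identity $\Theta^t f=f\,g_t$ and using the product rule gives $\Theta^{t+1}f=f\,(g_1 g_t+\Theta g_t)$, that is $g_{t+1}=\tfrac{E_2}{8}g_t+\Theta g_t$. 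Multiplying by $(-8)^{t+1}$ turns this into
\[
G_{t+1}=-E_2\,G_t-8\,\Theta G_t,
\]
which is the engine of the whole argument.

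The induction now runs as follows. The base case $t=0$ is simply $G_0=1=c(0,0,0)$. For the step, I assume $G_t=\sum_{\alpha+2\beta+3\gamma=t}c(\alpha,\beta,\gamma)E_2^\alpha E_4^\beta E_6^\gamma$ and apply $-E_2-8\Theta$ monomial by monomial. Multiplication by $-E_2$ just sends $\alpha\mapsto\alpha+1$. For the derivative, expanding $\Theta(E_2^\alpha E_4^\beta E_6^\gamma)$ by the product rule and inserting \eqref{rama} yields precisely four monomial shapes,
\[
\Big(\tfrac{\alpha}{12}+\tfrac{\beta}{3}+\tfrac{\gamma}{2}\Big)E_2^{\alpha+1}E_4^\beta E_6^\gamma-\tfrac{\alpha}{12}E_2^{\alpha-1}E_4^{\beta+1}E_6^\gamma-\tfrac{\beta}{3}E_2^\alpha E_4^{\beta-1}E_6^{\gamma+1}-\tfrac{\gamma}{2}E_2^\alpha E_4^{\beta+2}E_6^{\gamma-1},
\]
each of degree $\alpha+2\beta+3\gamma+1$. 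Collecting the coefficient of a fixed monomial $E_2^\alpha E_4^\beta E_6^\gamma$ in $-E_2 G_t-8\Theta G_t$ then expresses it as a linear combination of $c(\alpha-1,\beta,\gamma)$, $c(\alpha+1,\beta-1,\gamma)$, $c(\alpha,\beta+1,\gamma-1)$, and $c(\alpha,\beta-2,\gamma+1)$, and I claim these four weights agree with \eqref{coefficients} term by term. In particular the two sources of $c(\alpha-1,\beta,\gamma)$ --- the $-E_2$ multiplication and the $E_2^2$ piece of $\Theta E_2$ --- combine as $-1-8(\tfrac{\alpha-1}{12}+\tfrac{\beta}{3}+\tfrac{\gamma}{2})=-\tfrac13(2\alpha+8\beta+12\gamma+1)$, while the other three coefficients are $\tfrac23(\alpha+1)$, $\tfrac83(\beta+1)$, and $4(\gamma+1)$, exactly as in \eqref{coefficients}.

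The main thing to get right is the bookkeeping in this last collection step: one must invert each shift correctly (for instance $E_2^\alpha E_4^\beta E_6^\gamma$ is produced from $E_2^{\alpha+1}E_4^{\beta-1}E_6^{\gamma}$ through the $-\tfrac{\alpha'}{12}E_2^{\alpha'-1}E_4^{\beta'+1}E_6^{\gamma'}$ term, so one substitutes $\alpha'=\alpha+1$, $\beta'=\beta-1$) and check that no monomial with a negative exponent is ever needed. This last point is exactly why the recursion declares $c$ to vanish on negative arguments, so that the boundary of the simplex $\{\alpha+2\beta+3\gamma=t\}$ requires no separate treatment. Once the four coefficient comparisons are confirmed, the recursion \eqref{coefficients} is reproduced verbatim and the induction closes.
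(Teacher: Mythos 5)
Your proof is correct and follows essentially the same route as the paper's: apply $\Theta$ one more time, expand $\Theta\left(E_2^{\alpha}E_4^{\beta}E_6^{\gamma}\right)$ via Ramanujan's identities \eqref{rama}, and compare coefficients of monomials to reproduce the recursion \eqref{coefficients}, with all four weights matching exactly as you computed. The only difference is organizational: you work with the rescaled $G_t=(-8)^t\,\Theta^t(\eta(q)^3)/\eta(q)^3$ from the outset, so your induction establishes the existence of the monomial expansion on its own, whereas the paper first invokes Rose's quasimodularity result (Theorem~\ref{weight}) to justify the existence of coefficients $\widetilde{c}(\alpha,\beta,\gamma)$ and performs the $(-8)^t$ rescaling only at the end.
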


\begin{proof}
For convenience, we let $\psi(q):=\eta(q)^3.$  We calculate $\frac{\Theta^t(\psi(q))}{\psi(q)}$ by inducting on $t$. First,
it is easy to check $\Theta (\psi(q))=\frac18 \psi(q)E_2(q)$. Theorem \ref{weight} implies the existence of numbers $\widetilde{c}(\alpha,\beta,\gamma)$ for which
$$\frac{\Theta^t(\psi(q))}{\psi(q)}=\sum_{\substack {\alpha, \beta, \gamma\geq0 \\ \alpha+2\beta+3\gamma=t}} 
\widetilde{c}(\alpha,\beta,\gamma)
\cdot E_2^{\alpha}(q)E_4^{\beta}(q)E_6^{\gamma}(q).$$
This comprises of all weight $2t$ quasimodular summands in $\mathcal{U}_a(q)$. One more derivative $\Theta=q\frac{d}{dq}$ turns the last equation into (for brevity, we write $\widetilde{c}$ in place of $\widetilde{c}(\alpha,\beta,\gamma)$)
\begin{align*}
\Theta^{t+1}(\psi(q))
&=\Theta (\psi(q))\cdot\left(\sum_{\alpha,\beta,\gamma}\widetilde{c} \cdot E_2^{\alpha}(q)E_4^{\beta}(q)E_6^{\gamma}(q)\right)
+\psi(q)\cdot \sum_{\alpha, \beta, \gamma}\widetilde{c}\cdot \Theta(E_2^{\alpha}(q)E_4^{\beta}(q)E_6^{\gamma}(q)).
\end{align*}
On the other hand, Ramanujan's identities \eqref{rama} imply that
\begin{align*}
\Theta(E_2^{\alpha}E_4^{\beta}E_6^{\gamma})
&=\left(\frac{\alpha}{12}+\frac{\beta}3+\frac{\gamma}2\right)E_2^{\alpha+1}E_4^{\beta}E_6^{\gamma}-\frac{\alpha}{12}E_2^{\alpha-1}E_4^{\beta+1}E_6^{\gamma}  
- \frac{\beta}3E_2^{\alpha}E_4^{\beta-1}E_6^{\gamma+1}  - \frac{\gamma}2E_2^{\alpha}E_4^{\beta+2}E_6^{\gamma-1}.
\end{align*}
We find that the homogeneous weight $2t+2$ form satisfies
\begin{align*}
\frac{\Theta^{t+1}(\psi(q))}{\psi(q)}
&=\sum_{\substack {\alpha, \beta, \gamma\geq0 \\ \alpha+2\beta+3\gamma=t}} 
\left(\frac{\alpha}{12}+\frac{\beta}3+\frac{\gamma}2+\frac18\right)
\widetilde{c}\cdot E_2^{\alpha+1}E_4^{\beta}E_6^{\gamma} 
-\sum_{\alpha,\beta,\gamma}\frac{\alpha}{12} \,\widetilde{c}\cdot E_2^{\alpha-1}E_4^{\beta+1}E_6^{\gamma}  \\
& \qquad - \sum_{\alpha,\beta,\gamma}\frac{\beta}3 \, \widetilde{c}\cdot E_2^{\alpha}E_4^{\beta-1}E_6^{\gamma+1}  
- \sum_{\alpha,\beta,\gamma}\frac{\gamma}2 \, \widetilde{c}\cdot E_2^{\alpha}E_4^{\beta+2}E_6^{\gamma-1}.
\end{align*}
By comparing the coefficients of $E_2^{\alpha}E_4^{\beta}E_6^{\gamma}$ on both sides of the equation above,
we obtain the recursion  (with $\widetilde{c}(\alpha,\beta,\gamma)=\delta_{(0,0,0)}(\alpha,\beta,\gamma)$, a Dirac delta boundary conditions)
\begin{align*}
\widetilde{c}(\alpha,\beta,\gamma)
&=\left(\frac{\alpha}{12}+\frac{\beta}3+\frac{\gamma}2+\frac1{24}\right)\widetilde{c}(\alpha-1,\beta,\gamma)
-\frac{\alpha+1}{12}\cdot \widetilde{c}(\alpha+1,\beta-1,\gamma) \\
&\qquad -\frac{\beta+1}3\cdot \widetilde{c}(\alpha,\beta+1,\gamma-1) - \frac{\gamma+1}2\cdot 
\widetilde{c}(\alpha,\beta-2,\gamma+1).
\end{align*}

To determine the exact weight $2t$ term (independent of $a$), we take into account the
  factor of $(-8)^{\alpha+2\beta+3\gamma}$ to determine $c(\alpha,\beta,\gamma):=(-8)^{\alpha+2\beta+3\gamma}\cdot \widetilde{c}(\alpha,\beta,\gamma)$. As a result, we obtain the desired
  \begin{align*}
c(\alpha,\beta,\gamma)
&=-\frac13\left(2\alpha+8\beta+12\gamma+1\right)\cdot c(\alpha-1,\beta,\gamma)
+\frac23(\alpha+1) \cdot c(\alpha+1,\beta-1,\gamma) \\
&\qquad +\frac83(\beta+1) \cdot c(\alpha,\beta+1,\gamma-1) +4(\gamma+1) \cdot c(\alpha,\beta-2,\gamma+1).
\end{align*}
\end{proof}

\subsection{Proof of Theorem~\ref{ExplicitExpressionUa}}
We let $\mathcal{E}_t(q):= (-8)^t\cdot \frac{\Theta^t(\psi)}{\psi},$ and we define
$$\mathbb{E}_{2t}(q):=\sum_{(1^{m_1},\dots,t^{m_t})\vdash t}\,\prod_{j=1}^t\frac1{m_j!}\left(\frac{B_{2j}\,E_{2j}(q)}{(2j)\cdot (2j)!}\right)^{m_j}.$$
By inspection, we see that $\mathbb{E}_{2t}(q)$ has weight $2t.$
We claim that 
\begin{align} \label{EandEta} 
\mathbb{E}_{2t}(q)=\frac{(-1)^t}{4^t(2t+1)!}\cdot \mathcal{E}_t(q).
\end{align}
Let $\mathbf{S}_r(q):=\sum_{m\geq1}\frac{m^rq^m}{1-q^m}=\sum_{n\geq1}\sigma_r(n)q^n$. By expanding $\sum_{j,k\geq1}\frac{q^{kj}\cos(2kx)}k$ in two different ways, we find that it equals both of these
\begin{align} \label{proSigma}
\prod_{j\geq1} \left[1+\frac{4(\sin^2x)q^j}{(1-q^j)^2}\right]
&=\exp\left(-2\sum_{r\geq1}\frac{\mathbf{S}_{2r-1}(q)}{(2r)!} (-4x^2)^r\right).
\end{align}
Using the identity \cite[p. 13]{AAT}, we obtain
\begin{equation}\label{GoodFormula}
\prod_{k\geq1}\left (1+\frac{4q^k\sin^2x}{(1-q^k)^2}\right)=\sum_{a\geq0}4^a\mathcal{U}_a(q)(\sin x)^{2a},
\end{equation} 
and the Jacobi Triple Product then implies that
\begin{align} \label{proEta}
\sin x\prod_{k\geq1}\left(1+\frac{4q^k\sin^2x}{(1-q^k)^2}\right)
&=\frac{e^{ix}-e^{-ix}}{2i}\prod_{j\geq1}\frac{(1-q^je^{2ix})(1-q^je^{-2ix})}{(1-q^j)^2}\nonumber\\
&=\frac1{2i\cdot\psi(q)}\sum_{j\in\mathbb{Z}}(-1)^jq^{\binom{j+1}2}e^{(2n+1)ix} \nonumber \\
&=\frac1{\psi(q)}\sum_{t\geq0}(-1)^t\frac{x^{2t+1}}{(2t+1)!}\sum_{n\geq0}(-1)^n(2n+1)^{2t+1}q^{\binom{n+1}2}\nonumber\\
&=\sum_{t\geq0}\mathcal{E}_t(q)\frac{x^{2t+1}}{(2t+1)!}.
\end{align}
Using \eqref{EandSigma} and the generating function for P\'olya's \emph{cycle index formula} \cite[(1,5)]{Polya}, 
we obtain
\begin{align} \label{SigmaPolya}
\sin x\cdot \exp\left(-2\sum_{r\geq1}\frac{\mathbf{S}_{2r-1}(q)}{(2r)!} \,(-4x^2)^{2r}\right) 
&=\sin x\cdot\frac{x}{\sin x}\cdot\sum_{t\geq0} \left(\sum_{\lambda\vdash t}\prod_{j=1}^t\frac1{m_j!}
\left(\frac{B_{2j}\cdot E_{2s}(q)}{(2j)\cdot (2j!}\right)^{m_j}\right)(-4x^2)^t.
\end{align}
Combining \eqref{proSigma}, \eqref{proEta}, \eqref{SigmaPolya} and then comparing the coefficients of $x^{2t+1},$ we confirm \eqref{EandEta}. 

To the complete the proof, it suffices to determine the constants $b_t(a)$ for which
\begin{align} \label{structure}
\mathcal{U}_a(q)&=\sum_{t=0}^ab_t(a)\cdot \mathbb{E}_{2t}(q). 
\end{align}
It is convenient to recall the Andrews-Rose recursion \cite[Cor. 3]{Andrews-Rose}
\begin{align} \label{ARrecur}
\mathcal{U}_a(q)&=\frac1{2a(2a+1)}\left[(6\mathcal{U}_1(q)+a(a-1))\mathcal{U}_{a-1}(q)-2\Theta (\mathcal{U}_{a-1}(q))\right].
\end{align}
The structure of equation \eqref{structure} is preserved by \eqref{ARrecur} because of the identity
$$\Theta(\mathbb{E}_{2t-2})=t(2t+1)\mathbb{E}_{2t}-3\mathbb{E}_2\mathbb{E}_{2t-2}.$$
It is straightforward to see that
$$b_t(a)=\frac1{8a(2a+1)}\left[(2a-1)^2\cdot b_t(a-1)-8t(2t+1)\cdot b_{t-1}(a-1)\right],$$
with initial boundary conditions $b_0(0)=1$ and $b_t(a)=0$ when $t<0$ or $t>a$.
Finally, one checks that  $(-4)^t(2t+1)!\, w_t(a)$ satisfies this recurrence, thereby completing the proof of the theorem.

\subsection{Proof of Theorem~\ref{ExplicitExpressionUaStar}}
By reciprocating (\ref{GoodFormula}), we have
$$\sum_{n\geq0}(-4)^a\,\mathcal{U}_a^{\star}(q)(\sin x)^{2a}
=\prod_{k\geq1}\frac1{1+\frac{4q^k\sin^2x}{(1-q^k)^2}}.
$$
In analogy with the previous formula for $\mathbb{E}_{2t}(q)$ involving the $\mathcal{U}_a(q)$, we use \eqref{EandEta} to obtain an identity for  $\mathcal{U}_a^{\star}(q)$ with $\mathbb{E}_{2t}^*(q)$ (see \eqref{E-star}). 
Arguing as in the proof of Theorem~\ref{ExplicitExpressionUa} with Lemma~\ref{ele-hom}, 
we get 
$$\mathcal{U}_a^{\star}(q)=\sum_{t=0}^a w_t^{\star}(a)\cdot \mathbb{E}_{2t}^{\star}(q).$$

\section{Proof of Theorems~\ref{Mod11} and~\ref{Gordon}}\label{Section4}

Here we prove Theorem~\ref{Mod11} using Serre's theory of modular forms modulo primes $p$ (see  \cite[Section 2.8]{CBMS}, or \cite{SwD}) and a well-known criterion of Sturm that determines congruences between modular forms. In the sequel, we tacitly assume that $q:=e^{2\pi i z},$ the uniformizer for the point at infinity. We also prove Theorem~\ref{Gordon} by combining work of Andrews-Rose with  a classical result of Gordon, together with other allied observations.

\subsection{Modular forms modulo $p$}

We recall some facts from the theory of modular forms mod $p$. The key tool in the proof of Theorem~\ref{Mod11} is the following theorem of Sturm (see \cite{St} or p. 40 of \cite{CBMS}). 

\begin{theorem}\label{Sturm}
	Let $p$ be a prime. If $f(z)=\sum_{n\geq 0} a(n)q^n$ and $g(z)=\sum_{n\geq 0} b(n)q^n$ are modular forms of weight $k$ on  $\SL_2(\Z)$ with integer coefficients, then $f(z)\equiv g(z)\pmod p$ if and only if $a(n)\equiv b(n) \pmod p$ for all $n\leq k/12$. 
\end{theorem}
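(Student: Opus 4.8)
The plan is to prove only the nontrivial implication, since the forward direction is immediate: if $f\equiv g\pmod p$ then certainly $a(n)\equiv b(n)\pmod p$ for all $n$. So I would set $h:=f-g=\sum_{n\geq 0}c(n)q^n$, which is a modular form of weight $k$ on $\SL_2(\Z)$ with integer coefficients satisfying $c(n)\equiv 0\pmod p$ for every $n\leq k/12$, and reduce the theorem to the single claim that $\bar h\equiv 0$ in $\F_p[[q]]$, that is, $c(n)\equiv 0\pmod p$ for \emph{all} $n$.

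First I would record the two structural inputs about level-one forms. The first is the valence formula, which yields $\dim_\C M_k(\SL_2(\Z))\leq\lfloor k/12\rfloor+1$ together with its sharp consequence: a nonzero weight-$k$ form cannot vanish to order exceeding $k/12$ at the cusp. The second is the existence of an integral echelon (Miller) basis. Because $E_4,E_6$ have constant term $1$ and $\Delta=q+O(q^2)$ all lie in $\Z[[q]]$, the lattice $M_k(\SL_2(\Z))\cap\Z[[q]]$ is free of rank $\dim_\C M_k$ and admits a $\Z$-basis $f_0,\dots,f_d$ with $f_j=q^{n_j}+O(q^{n_j+1})$, where $0\leq n_0<n_1<\cdots<n_d\leq\lfloor k/12\rfloor$ and each leading coefficient equals $1$.

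Next I would expand $h$ in this basis. Since $h\in M_k(\SL_2(\Z))\cap\Z[[q]]$, we may write $h=\sum_j\lambda_jf_j$ with $\lambda_j\in\Z$, and the echelon shape forces these coefficients to be determined recursively from the $c(n_j)$: comparing coefficients of $q^{n_j}$ gives $\lambda_0=c(n_0)$ and $\lambda_j=c(n_j)-\sum_{i<j}\lambda_i\,[q^{n_j}]f_i$. Because each index satisfies $n_j\leq\lfloor k/12\rfloor$, the hypothesis $c(n)\equiv 0\pmod p$ for $n\leq k/12$ forces, by induction on $j$, that $\lambda_j\equiv 0\pmod p$ for every $j$. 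Hence $\bar h=\sum_j\bar\lambda_j\,\bar f_j=0$, which is exactly $h\equiv 0\pmod p$, completing the argument.

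The step I expect to be the main obstacle is the second structural input: one needs not merely an echelon $\C$-basis of $M_k$ (which follows from the valence formula alone), but one whose members have integer coefficients with leading coefficient a $p$-adic unit. This integrality is precisely what guarantees that reduction modulo $p$ preserves the echelon shape, equivalently that the $\bar f_j$ remain $\F_p$-linearly independent, and it is where the proof genuinely uses that we work over $\Z$ rather than over $\C$; a naive reduction of the complex valence formula fails, since passing to $\F_p[[q]]$ can raise the order of vanishing at the cusp when leading coefficients happen to be divisible by $p$. One could instead organize the same content through Serre's and Swinnerton-Dyer's filtration of mod-$p$ modular forms, but the integral echelon basis keeps the argument self-contained; once it is in place, everything remaining is linear algebra over $\Z/p\Z$.
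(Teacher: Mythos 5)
Your proof is correct. One point of comparison to settle first: the paper does not prove this statement at all --- it is quoted as a known theorem of Sturm, with references to \cite{St} and to p.~40 of \cite{CBMS} --- so there is no ``paper proof'' to match yours against; what you have supplied is a genuine, self-contained proof of the cited result in the level-one case. Your route is the standard Miller-basis argument: the reduction to $h=f-g$, the integral echelon basis $f_j=q^{n_j}+O(q^{n_j+1})$ of $M_k(\SL_2(\Z))\cap\Z[[q]]$ with $n_j\leq\lfloor k/12\rfloor$ and unit leading coefficients, and the triangular recursion $\lambda_j=c(n_j)-\sum_{i<j}\lambda_i\,[q^{n_j}]f_i$ forcing $\lambda_j\equiv 0\pmod p$ by induction, are all sound, and you correctly flag the one place where integrality (rather than just the complex valence formula) is genuinely needed --- reduction mod $p$ can increase the order of vanishing at the cusp, so the dimension count over $\C$ alone does not suffice. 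Two small remarks: your argument is intrinsically a level-one argument, whereas Sturm's original theorem (and the version in \cite{CBMS}) holds for arbitrary congruence subgroups, where no echelon basis with unit leading terms is available and the proof proceeds differently (Sturm multiplies by conjugate forms to descend to level one, or one can invoke the Serre--Swinnerton-Dyer filtration you mention); and your existence claim for the integral echelon basis, while standard, deserves the one-line construction (row-reducing the forms $\Delta^{j}E_4^{a}E_6^{b}$ of weight $k$, which lie in $\Z[[q]]$ with leading coefficient $1$) to keep the proof fully self-contained. Neither remark affects correctness for the statement as given, which is exactly the level-one case used in the paper.
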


\noindent
We shall make use of derivatives of modular forms. Although differentiation does not preserve modularity, it does preserve modular forms modulo $p$ (for example, see \cite{Serre1}).

\begin{lemma}\label{lem1}
	If $f(z)=\sum_{n\geq 0} a(n)q^n\in M_k \cap \Z[[q]]$, then there is a modular form $g(z)=\sum_{n\geq 0} b(n)q^n \in M_{k+p+1}\cap \Z[[q]]$ for which
	$$
	g\equiv \Theta(f):=\sum_{n\geq 0}na(n)q^n\pmod p.
	$$
\end{lemma}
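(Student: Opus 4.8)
The plan is to prove Lemma~\ref{lem1}, the statement that the $\Theta$-operator preserves modularity modulo $p$. Specifically, I want to show that for $f \in M_k \cap \Z[[q]]$, there is a genuine modular form $g \in M_{k+p+1} \cap \Z[[q]]$ with $g \equiv \Theta(f) \pmod p$. The key input is the weight-$2$ Eisenstein series $E_2(q) = 1 - 24\sum \sigma_1(n)q^n$, which is only quasimodular. The crucial observation is that although $\Theta(f)$ is not itself modular (it acquires a non-holomorphic correction involving $1/y$), a specific combination of $\Theta(f)$ and $E_2 f$ \emph{is} modular.

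**First I would** recall the standard weight-raising identity. If $f$ has weight $k$, then the Serre derivative $\vartheta_k(f) := \Theta(f) - \frac{k}{12}E_2 f$ is a modular form of weight $k+2$. This follows from the transformation law of $E_2$, namely $E_2(-1/z) = z^2 E_2(z) + \frac{12z}{2\pi i}$, which is engineered precisely to cancel the anomaly in the transformation of $\Theta(f)$. Since $E_2 \in \Z[[q]]$ and $f \in \Z[[q]]$, and $\frac{k}{12}$ is rational, this already shows $\Theta(f) = \vartheta_k(f) + \frac{k}{12}E_2 f$ is a sum of a weight-$(k+2)$ modular form and $\frac{k}{12}E_2 f$. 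The problem is the leftover $\frac{k}{12}E_2 f$, which is only quasimodular, and the denominator $12$ obstructs working integrally mod~$p$.

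**The key step** is to eliminate $E_2$ modulo $p$ using Fermat/von Staudt-type congruences for Eisenstein series. The classical fact (due to Serre) is that $E_{p+1}(q) \equiv 1 \equiv E_2(q) \cdot \big(\text{something}\big)$; more precisely, one uses that $E_{p-1} \equiv 1 \pmod p$ for $p \geq 5$, and that there is a weight-$(p+1)$ modular form congruent to $E_2$ modulo $p$. Indeed, writing $E_2 \equiv E_{p+1} \pmod{p}$ when such a congruence holds (which follows from comparing $\sigma_1$ with $\sigma_p$ via Fermat's little theorem, since $d^p \equiv d \pmod p$ gives $\sigma_p(n) \equiv \sigma_1(n)$, and adjusting Bernoulli denominators with the Kummer/von Staudt congruences), I can replace $E_2$ by a true modular form of weight $p+1$ without changing the mod-$p$ reduction. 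Then $\frac{k}{12}E_2 f \equiv \frac{k}{12} E_{p+1} f \pmod p$, and clearing the denominator $12$ modulo $p$ (valid once $p > 3$, with the small primes $p=2,3$ handled separately or absorbed since the lemma is applied only for the relevant primes) yields a modular form of weight $(p+1)+k$. Combining, $\Theta(f) \equiv \vartheta_k(f)\cdot E_{p-1} + \frac{k}{12}E_{p+1}f \pmod p$, where I multiply $\vartheta_k(f)$ (weight $k+2$) by $E_{p-1} \equiv 1$ (weight $p-1$) to match weights, producing a single modular form $g$ of the common weight $k + p + 1$.

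**The main obstacle** I anticipate is bookkeeping the weight so that both summands land in the \emph{same} space $M_{k+p+1}$, and justifying the congruence $E_2 \equiv E_{p+1} \pmod p$ cleanly, including the behavior of the Bernoulli-number denominators via von Staudt--Clausen. The weight-matching is resolved by the trick of multiplying the lower-weight piece by the appropriate power of $E_{p-1}$, which is $\equiv 1 \pmod p$ and hence harmless on $q$-expansions while correcting the weight. Integrality is the other point to watch: I must confirm that after clearing the factor $\frac{1}{12}$ (a unit mod $p$ for $p \geq 5$) the resulting form still has coefficients in $\Z$; this is automatic since it equals $\Theta(f)$ mod~$p$, which manifestly lies in $\Z[[q]]$. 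The cases $p \in \{2,3\}$ are not needed for the congruences of Theorem~\ref{Mod11} and Theorem~\ref{Gordon} (which involve $p \in \{3,11,17\}$, and $p=3$ can be treated directly by the analogous argument with $E_4 \equiv 1 \pmod 3$), so I would simply restrict to $p \geq 5$ and remark on $p=3$ separately, citing \cite{Serre1} for the standard statement.
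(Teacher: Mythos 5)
The paper never proves this lemma: it is quoted from the literature (``for example, see \cite{Serre1}''), so there is no internal argument to compare against. Your proposal supplies the standard proof that those references contain, and its mathematical core is correct: the Serre derivative $\vartheta_k(f)=\Theta(f)-\tfrac{k}{12}E_2f$ lies in $M_{k+2}$; the congruence $E_{p+1}\equiv E_2\pmod p$ holds for $p\geq 5$ by Fermat's little theorem ($\sigma_p(n)\equiv\sigma_1(n)\pmod p$) together with the Kummer congruence $B_{p+1}/(p+1)\equiv B_2/2\pmod p$; and multiplying the weight $k+2$ piece by $E_{p-1}\equiv 1\pmod p$ matches weights, so that
$$
g:=\vartheta_k(f)\,E_{p-1}+\tfrac{k}{12}E_{p+1}f\in M_{k+p+1},\qquad g\equiv\Theta(f)\pmod p.
$$
Two caveats. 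First, your integrality claim is too quick: this $g$ has coefficients in $\Z_{(p)}$ (denominators dividing $12$), not necessarily in $\Z$, and ``it is automatic since it equals $\Theta(f)$ mod $p$'' conflates $p$-integrality with integrality. The lemma as stated wants $g\in M_{k+p+1}\cap\Z[[q]]$; the standard two-line repair is to expand $g$ in a basis of $M_{k+p+1}$ consisting of forms with integer coefficients and leading terms $1,q,q^2,\dots$ (so the basis coefficients of $g$ are $p$-integral rationals), and then replace each basis coefficient by an integer congruent to it modulo $p$. Second, your restriction to $p\geq 5$ is harmless for this paper, since Lemma~\ref{lem1} is invoked only with $p=11$ in the proof of Theorem~\ref{Mod11}; the mod $3$ congruences of Theorem~\ref{Gordon} are obtained by an entirely different argument that does not use this lemma, so no separate treatment of $p\in\{2,3\}$ is actually needed.
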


\subsection{Proof of Theorem~\ref{Mod11}} 
We let
$\mathcal{U}_{10}(q)=F_0(q)+F_2(q)+F_4(q)+F_6(q)+F_8(q),$
where each  $F_{2i}(q)$  is a sum of $E_2^{\alpha}E_4^{\beta}E_6^{\gamma}$ (suppressing the $q$), where $2\alpha+4\beta+6\gamma\equiv 2i\pmod{10}.$ 
Theorem \ref{ExplicitExpressionUa} then gives
 \begin{displaymath}
\begin{split}
F_0(q)=&\frac{46189}{5772436045824}-\frac{2008213E_4E_6}{4271802792542208000}
+\dots +\frac{E_2^{10}}{230078188847156428800}, \\
F_2(q)=&-\frac{25587296781661E_2}{2645567198945303592960}-\frac{604841E_6^2}{48057781416099840000}
+\dots+\frac{7862933E_2^6}{63910608013099008000}, \\
F_4(q)=&-\frac{79923511502753E_4}{67133754108574433280000}+\frac{79923511502753E_2^2}{26853501643429773312000} 
+\dots-\frac{16333E_2^7}{4473742560916930560}, \\
F_6(q)=&-\frac{70726885883E_6}{333200617818292224000}+\frac{70726885883E_2E_4}{126933568692682752000}-
\dots +\frac{1819E_2^8}{25564243205239603200}, \\
F_8(q)=&-\frac{316100258731E_4^2}{20732482886471516160000}+\frac{316100258731E_2E_6}{3887340541213409280000}-
 \dots -\frac{19E_2^9}{23007818884715642880}.
\end{split}
\end{displaymath}
Each of these $q$-series is 11-integral, and so they may be reduced modulo 11 to obtain
\begin{displaymath}
\begin{split}
&\widehat{F}_0(q):=F_0(q)\!\!\!\!\pmod{11}\equiv 2q^3+6q^4+7q^5+8q^6+5q^7+2q^8+2q^9+\dots\,\,\,\,\,\, \pmod{11},\\
&\widehat{F}_2(q):=F_2(q)\!\!\!\!\pmod{11}\equiv 6q^3+7q^4+10q^5+7q^6+8q^7+7q^8+6q^9+\dots \,\,\, \pmod{11},\\
&\widehat{F}_4(q):=F_4(q)\!\!\!\!\pmod{11}\equiv 7q^3+10q^4+8q^5+2q^6+4q^8+7q^9+\dots\,\,\,\,\,\,\, \ \ \ \ \ \ \pmod{11},\\
&\widehat{F}_6(q):=F_6(q)\!\!\!\!\pmod{11}\equiv 10q^3+8q^4+9q^5+10q^6+q^7+4q^8+10q^9+\dots\pmod{11},\\
&\widehat{F}_8(q):=F_8(q)\!\!\!\!\pmod{11}\equiv 8q^3+2q^4+6q^5+6q^6+8q^7+5q^8+8q^9+\dots \,\,\,\,\,\, \pmod{11}.
\end{split}
\end{displaymath}  
Using the congruences $E_2(q)\equiv E_{12}(q)\pmod{11}$ and $E_{10}(q)\equiv 1\pmod{11}$, we observe  that $\widehat{F}_0(q), \widehat{F}_2(q),\widehat{F}_4(q),\widehat{F}_6(q),$ and $\widehat{F_8}(q)$ are modular forms modulo $11$ of weight $120, 72, 84, 96,$ and $108$, respectively, on $\SL_2(\Z).$ 

We proceed to isolate the arithmetic progression of coefficients that is relevant for the theorem. 
We apply the differential operators to $\mathcal{U}_{10}(q)$ to eliminate  terms with exponents $n\equiv 0, 1,3,4,5,9\pmod{11}.$ 
The non-zero classes are the quadratic residues modulo 11. Using Fermat's Little Theorem and Euler's Criterion, this is achieved by
\begin{displaymath}
\begin{split}
G_1(q):&\equiv\sum_{n\equiv 2,6,7,8,10\pmod{11}}MO(10;n)q^n
\equiv \sum_{i=0}^4 -5[\Theta^{10}(\widehat{F}_{2i}(q))-\Theta^5(\widehat{F}_{2i}(q))]\pmod{11}.
\end{split}
\end{displaymath}
Next, we proceed to remove the terms with exponents that are quadratic non-residues apart from those with $n\equiv 7\pmod{11}$. For instance,
to eliminate $n\equiv 2\pmod{11}$ from $G_1(q)$ compute
\begin{displaymath}
\begin{split}
G_2(q):=\Theta (G_1(q))-2G_1(q)
&\equiv\sum_{n\equiv 6,7,8,10\pmod{11}}MO(10;n)q^n \pmod{11}.
\end{split}
\end{displaymath}
We repeat this process to remove terms with exponents $n\equiv 6,8,10\pmod{11},$ and
we get
\begin{displaymath}
\begin{split}
\sum_{n\equiv 7\pmod{11}}&MO(10;n)q^n \pmod{11} \\
&\equiv -5(\Theta^{4}(\widehat{F}_2)-\Theta^{9}(\widehat{F}_2))+9(\Theta^{3}(\widehat{F}_4)-\Theta^{8}(\widehat{F}_4))-3(\Theta^{2}(\widehat{F}_6)-\Theta^{7}(\widehat{F}_6))+\Theta(\widehat{F}_8)\\
&\ \ \ \ -\Theta^{6}(\widehat{F}_8)-4(\widehat{F}_0-\Theta^{5}(\widehat{F}_0))-5(\Theta^{4}(\widehat{F}_4)-\Theta^{9}(\widehat{F}_4))+9(\Theta^{3}(\widehat{F}_6)-\Theta^{8}(\widehat{F}_6))\\
&\ \ \ \ -3(\Theta^{2}(\widehat{F}_8)-\Theta^{7}(\widehat{F}_8))+\Theta(\widehat{F}_0)-\Theta^{6}(\widehat{F}_0)-4(\widehat{F}_2-\Theta^{5}(\widehat{F}_2))
-5(\Theta^{4}(\widehat{F}_6)-\Theta^{9}(\widehat{F}_6)) \\
&\ \ \ \ +9(\Theta^{3}(\widehat{F}_8)-\Theta^{8}(\widehat{F}_8))-3(\Theta^{2}(\widehat{F}_0)-\Theta^{7}(\widehat{F}_0))+\Theta(\widehat{F}_2)-\Theta^{6}(\widehat{F}_2)
-4(\widehat{F}_4-\Theta^{5}(\widehat{F}_4)) \\
&\ \ \ \ -5(\Theta^{4}(\widehat{F}_8)-\Theta^{9}(\widehat{F}_8))+9(\Theta^{3}(\widehat{F}_0)-\Theta^{8}(\widehat{F}_0))-3(\Theta^{2}(\widehat{F}_2)
-\Theta^{7}(\widehat{F}_2))+\Theta(\widehat{F}_4) \\
&\ \ \ \ -\Theta^{6}(\widehat{F}_4) 4(\widehat{F}_6-\Theta^{5}(\widehat{F}_6))-5(\Theta^{4}(\widehat{F}_0)
-\Theta^{9}(\widehat{F}_0)) +9(\Theta^{3}(\widehat{F}_2)-\Theta^{8}(\widehat{F}_2)) \\
&\ \ \ \ -3(\Theta^{2}(\widehat{F}_4)-\Theta^{7}(\widehat{F}_4))+\Theta(\widehat{F}_6)-\Theta^{6}(\widehat{F}_6)-4(\widehat{F}_8-\Theta^{5}(\widehat{F}_8)).
\end{split}
\end{displaymath}
Now we collect these terms so that
$$
\sum_{n\equiv 7\pmod{11}}MO(10;n)q^n \pmod{11}=Y_0(q)+Y_2(q)+Y_4(q)+Y_6(q)+Y_8(q),
$$
where $Y_{2i}(q)$ consists of  those  $\Theta^t(\widehat{F}_j(q))$ with weight congruent to $i$ modulo $10$. 
By Lemma~\ref{lem1}, we find that $Y_0(q),Y_2(q),Y_4(q),Y_6(q)$, and $Y_8(q)$ are modular forms modulo 11 with weights $228,180,192,204 $, and $216$, respectively on $\SL_2(\Z)$. Finally, by Sturm's Theorem \ref{Sturm} (i.e. checking at most $20$ terms), we find that each of these modular forms vanishes modulo 11, which implies the theorem.

\subsection{Proof of Theorem~\ref{Gordon}}

The generating function for the 3-colored partition function satisfies 
$$
P_3(q)=\sum_{n\geq0} c_3(n)q^n=\prod_{n\geq1}\frac{1}{(1-q^n)^3}\equiv \sum_{n\geq 0}p(n)q^{3n}\pmod 3.
$$
Therefore, we have that $3\mid c_3(n)$ whenever $3\nmid n.$
Furthermore, Gordon \cite{Gordon} proved that
\begin{displaymath}
c_3(11n+7)\equiv  0\pmod{11}.
\end{displaymath}
We further claim that $c_3(17n+15)\equiv 0\pmod{17}$. To prove this congruence, we employ Ramanujan's weight 12 cusp form $\Delta(z):=\eta(z)^{24}$ through the following observation:
$$
q^2\sum_{n\geq 0}c_3(n)q^n \cdot \prod_{n\geq1}(1-q^{17n})^3 \equiv q^2\prod_{n\geq1}(1-q^n)^{48}\pmod{17}=\Delta(z)^2.
$$
One easily checks that $\Delta^2 \ | \  T_{17}\equiv 0\pmod{17},$ which means that every seventeenth coefficient of $\Delta(z)^2$ vanishes modulo 17. This congruence follows immediately from the fact
that
$$
\prod_{n\geq1}(1-q^{17n})^3\in (\Z/17\Z)[q^{17}].
$$

Now suppose that $\ell\in \{3, 11, 17\}$ and
$1\leq a \equiv \ell-1\pmod{\ell}.$ 
If $\ell\nmid n(n+1)/2,$ then we have
$$
\frac{(2n+1)(n+a)!}{(2a+1)! (n-a)!}\equiv 0\pmod{\ell}.
$$
Therefore, the identity in Lemma~\ref{newgen_for_U} collapses modulo $\ell$ and gives
$$\mathcal{U}_a(q)=\sum_{n\geq 0}MO(a;n)q^n\equiv  P_3(q)\cdot \sum_{n\geq 0} A(a,\ell; \ell n)q^{\ell n}\pmod \ell.
$$
The power series on the right is in $(\Z/\ell\Z)[q^{\ell}],$ and so the $MO(a;n)$ inherit the $c_3(n)$ congruences.

\section{Proof of Theorem~\ref{Lacunarity}}\label{Section5}

\noindent
Here we prove Theorem~\ref{Lacunarity} and Corollary~\ref{natural}.

\subsection{Nuts and Bolts}
We first recall the definition of Hecke operators.
Let $m$ be a positive integer and $f(z) = \sum_{n=0}^{\infty} a(n)q^n \in M_{k}   $. Then the action of Hecke operator $T_m$ on $f(z)$ is defined by 
\begin{align}
f(z)\,|\,T_m := \sum_{n=0}^{\infty} \left(\sum_{d\mid \gcd(n,m)}d^{k-1}a\left(\frac{nm}{d^2}\right)\right)q^n.
\end{align}
In particular, if $m=p$ is a prime, we have 
\begin{align}
f(z) \,|\, T_p :=f(z)\,|\, U_p+p^{k-1}f(z)\,|\, V_p,
\end{align}
where $f(z)\,|\, U_p:=\sum_{n=0}^{\infty}a(pn)q^{n}$ and $f(z)\,|\, V_p:=\sum_{n=0}^{\infty}a(n)q^{pn}$.

Let's recall a result of Serre \cite{Serre2} (also see \cite[Lemma 2.63 and Theorem 2.65]{CBMS}) on the action of Hecke operator on cusp forms. For a number field $K$, let $\mathcal{O}_{K}$ denote its ring of integers.
\begin{lemma}\label{Serre_lemma}
For $1\leq i\leq t$, let  $f_i(z)=\sum_{n=1}^{\infty}a_i(n)q^n\in M_k$ be a modular form
with coefficients in the ring of integers of a number field $O_K.$ Then the following are true.
	
	\noindent	
	(i) If $\fm\subset\mathcal{O}_{K}$ is an ideal of norm $M$, then a positive proportion of the primes $p\equiv-1\pmod{M}$ satisfy
	$$f_1(z) \mid T_{p} \equiv f_2(z) \mid T_{p} \equiv\cdots f_t(z) \mid T_{p} \equiv 0 \pmod{\fm}.$$
	(ii) There is a constant $a>0$  such that for every $1\leq i\leq t$ we have
	$$ \# \left\{n\leq X: a_i(n)\not\equiv 0 \pmod{\fm} \right\}= O\left(X/(\log{}X)^{a}\right).$$
	\end{lemma}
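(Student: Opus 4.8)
The plan is to recognize first that, because $\SL_2(\Z)$ has a single cusp, any $f_i=\sum_{n\ge 1}a_i(n)q^n\in M_k$ with vanishing constant term is in fact a cusp form; thus $f_1,\dots,f_t\in S_k(\SL_2(\Z))$. Both assertions then follow from the interplay between the mod $\fm$ Galois representations attached to cusp forms and the Chebotarev density theorem, supplemented by an elementary sieve for part (ii). Throughout I let $\ell$ be the rational prime below $\fm$, and I enlarge $K$ (extending $\fm$ accordingly) so that the Hecke-stable span of $f_1,\dots,f_t$, being a sum of Hecke eigenspaces of $S_k(\SL_2(\Z))$, acquires a basis of normalized eigenforms $g_1,\dots,g_s$ with coefficients in $\mathcal{O}_K$; writing $f_i=\sum_j c_{ij}g_j$ reduces every statement to the $g_j$.

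For part (i), I would invoke Deligne's construction: to each $g_j$ is attached a continuous representation $\rho_j\colon \Gal(\overline{\Q}/\Q)\to \mathrm{GL}_2(\mathcal{O}_K/\fm)$, unramified outside $\ell$, with $\tr\rho_j(\mathrm{Frob}_p)\equiv a_{g_j}(p)$ and $\det\rho_j=\chi_\ell^{\,k-1}$, where $\chi_\ell$ is the mod $\ell$ cyclotomic character. I form $\rho:=\bigoplus_j\rho_j$ together with the mod $M$ cyclotomic character $\chi$, cutting out a finite Galois extension $L/\Q$. The crucial observation is that complex conjugation $c$ is a simultaneous witness: oddness gives $\det\rho_j(c)=(-1)^{k-1}=-1$ (as $k$ is even), so $\rho_j(c)$ is an involution of determinant $-1$ and hence $\tr\rho_j(c)=0$ for every $j$, while $\chi(c)=-1$ encodes $p\equiv-1\pmod M$. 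Since the set $\{g\in\Gal(L/\Q):\ \chi(g)=-1,\ \tr\rho_j(g)=0\ \forall j\}$ is conjugation-invariant and contains $c$, it is a nonempty union of conjugacy classes, so Chebotarev supplies a positive proportion of primes $p\equiv-1\pmod M$ with $a_{g_j}(p)\equiv 0\pmod\fm$ for all $j$; for such $p$ one gets $f_i\mid T_p=\sum_j c_{ij}\,a_{g_j}(p)\,g_j\equiv 0\pmod\fm$.

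For part (ii), let $S$ be the positive-density set (of density $\delta>0$) of primes with $a_{g_j}(p)\equiv 0\pmod\fm$ for all $j$, which exists by the Chebotarev argument above. Multiplicativity of eigenform coefficients gives $a_{g_j}(pm)=a_{g_j}(p)a_{g_j}(m)$ when $p\nmid m$, so if $p\in S$ exactly divides $n$ then $a_{g_j}(n)\equiv 0$, whence $a_i(n)\equiv 0\pmod\fm$. Consequently $a_i(n)\not\equiv 0\pmod\fm$ forces every prime of $S$ to occur in $n$ to exponent $0$ or $\ge 2$. Splitting $n=st$ with $s$ the $S$-part (necessarily squarefull) and $t$ free of primes in $S$, and using that squarefull numbers are summably sparse together with the Wirsing/Mertens estimate $\#\{t\le Y:\ p\nmid t\ \forall p\in S\}=O(Y/(\log Y)^{\delta})$, yields $\#\{n\le X:\ a_i(n)\not\equiv 0\pmod\fm\}=O(X/(\log X)^{\delta})$, which is the claim with $\alpha=\delta$.

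The main obstacle is purely the input from the theory of Galois representations: one must know that the mod $\fm$ representations $\rho_j$ exist with the stated Frobenius traces and, most importantly, are odd, so that complex conjugation forces vanishing trace — this is exactly what makes the target Chebotarev set nonempty and simultaneously pins down the congruence $p\equiv-1\pmod M$. By contrast, the passage from eigenforms to the general $f_i$ through the eigenbasis expansion, and the final sieve estimate, are routine.
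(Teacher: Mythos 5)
The paper never proves this lemma at all: it is quoted from Serre \cite{Serre2} (see also \cite[Lemma 2.63 and Theorem 2.65]{CBMS}), and your proposal is essentially a reconstruction of Serre's own argument from that source --- reduction to cusp forms and Hecke eigenforms, odd mod-$\fm$ Galois representations \`a la Deligne, the vanishing of the trace of complex conjugation (forced by $\rho_j(c)^2=1$, $\det\rho_j(c)=-1$ and Cayley--Hamilton), Chebotarev to get a positive density of primes $p\equiv -1\pmod M$ with $a_{g_j}(p)\equiv 0\pmod\fm$, and the multiplicativity-plus-squarefull sieve for the density statement. Your outline is correct; the only points you gloss over are routine: the coefficients $c_{ij}$ in the eigenform decomposition may have denominators, which one fixes by clearing them and enlarging the modulus (primes $\equiv -1$ modulo the larger norm are still $\equiv -1\pmod M$), and for a general ideal $\fm$ the mod-$\fm$ representations are assembled from the prime-power components via the Chinese Remainder Theorem before invoking Deligne's construction.
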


\noindent
We next recall some facts about $p$-adic modular forms developed by Serre \cite{Serre3}.
Let $p$ be a prime. Consider the field of $p$-adic numbers $\Q_p$, with its non-archimedean valuation $\nu_p$. We say $x\in \Q_p$ is $p$-integral if $\nu_p(x)\geq 0$. Let $f =\sum a(n) q^n\in\Q_p[[q]]$ be a formal power series, we define $\nu_p(f) := \inf_{n} \nu_p(a_n)$. If $\nu_p(f)\geq m$, we write as well $f\equiv 0\pmod{p^m}$. Assume $\{f_i\}$ to be a sequence of elements in $\Q_p[[q]]$. We say that $f_i\to f$ if the coefficients of $f_i$ tend uniformly to those of $f$, i.e., $\nu_p(f-f_i)\to\infty$.
A $p$-{\it adic modular form} $f$ is a formal series with coefficients in $\Q_p$ which is the
	limit of classical modular forms $f_i$ of weights $k_i$.

In order to prove Theorem \ref{Lacunarity}, we need the following preliminary result.

\begin{lemma}\label{lem2}
	The following are true:
	
\noindent
	(i) If $m$ is a positive integer, then we have that
	$$
	E_2(z)\equiv \frac{1}{(2^m-1)}\sum_{i=1}^{m}2^{i-1} E_{2+3\cdot2^{m+1}}(z) \,|\, V_{2^{i-1}}\pmod{2^m}.
	$$
	Moreover, $E_2(z)\pmod{2^m}$ is the reduction of a weight $2+3\cdot2^{m+1}$ modular form on $\SL_2(\Z)$.\\
	(ii) If $m$ is a positive integer, then we have that
	$$
	E_2(z)\equiv\frac{2}{(3^m-1)}\sum_{i=1}^{m}3^{i-1} E_{2+4\cdot3^{m}}(z) \,|\, V_{3^{i-1}}\pmod{3^m}.
	$$
	Moreover, $E_2(z)\pmod{3^m}$ is the reduction of a weight $2+4\cdot3^{m}$ modular form on $\SL_2(\Z)$.\\
	(iii)  If $p\geq 5$ is prime and $m$ is a positive integer, then we have that
	$$
	E_2(z) \equiv \frac{(p-1)}{(p^m-1)}\sum_{i=1}^{m}p^{i-1}E_{2+(p-1)p^{m-1}}(z) \,|\, V_{p^{m-1}}\pmod {p^m}.
	$$
	In particular, $E_2(z)\pmod{p^m}$ is the reduction of a weight $2+p^{m-1}(p-1)$ modular form on $\SL_2(\Z)$.
\end{lemma}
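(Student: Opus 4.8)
\emph{Overview of the strategy.} The plan is to prove all three parts by a single mechanism: a direct comparison of $q$-expansion coefficients, which reduces everything to Euler's theorem for the divisor sums and to the von Staudt--Clausen and Kummer congruences for the Bernoulli prefactor. Throughout write $k_m$ for the target weight, i.e. $k_m=2+3\cdot 2^{m+1}$, $k_m=2+4\cdot 3^m$, or $k_m=2+(p-1)p^{m-1}$ in cases (i), (ii), (iii) respectively, and observe that in every case $k_m$ is chosen so that $\varphi(p^m)=(p-1)p^{m-1}$ divides $k_m-2$ and so that $p^{k_m-1}\equiv 0 \pmod{p^m}$. Using the definition \eqref{EandSigma} of $E_{2k}$ and the definition of $V_{p^{i-1}}$, the coefficient of $q^n$ on the right-hand side of each identity is an explicit combination of the values $\sigma_{k_m-1}(n/p^{i-1})$ weighted by $p^{i-1}$, so it suffices to match these against the coefficients $-24\,\sigma_1(n)$ of $E_2(z)$.

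\emph{Coefficient comparison.} First I would dispose of the constant term: since each $E_{k_m}\,|\,V_{p^{i-1}}=1+O(q^{p^{i-1}})$, the constant term of the right-hand side is $\tfrac{p-1}{p^m-1}\sum_{i=1}^m p^{i-1}=1$, which matches $E_2(z)$. For the coefficient of $q^n$ with $n\geq 1$, write $n=p^a n'$ with $p\nmid n'$. Because $p^{k_m-1}\equiv 0\pmod{p^m}$, every local factor $\sigma_{k_m-1}(p^j)=1+p^{k_m-1}+\cdots\equiv 1\pmod{p^m}$ collapses, and because $\varphi(p^m)\mid k_m-2$ Euler's theorem gives $\sigma_{k_m-1}(n')\equiv\sigma_1(n')\pmod{p^m}$; hence $\sigma_{k_m-1}(n/p^{i-1})\equiv\sigma_1(n')\pmod{p^m}$ for every admissible $i$. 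The right-hand coefficient therefore reduces to the Bernoulli prefactor $\tfrac{p-1}{p^m-1}\cdot\bigl(-\tfrac{2k_m}{B_{k_m}}\bigr)$ times $\sigma_1(n')$ times the truncated geometric sum $\sum_{i=1}^{\min(a+1,m)}p^{i-1}$. Comparing with $-24\,\sigma_1(n)=-24\,\tfrac{p^{a+1}-1}{p-1}\,\sigma_1(n')$ and repeatedly using $p^m\equiv 0\pmod{p^m}$, one checks that the two geometric sums reconcile in both regimes $a\geq m-1$ and $a\leq m-2$, \emph{provided} one has the single scalar congruence $\tfrac{2k_m}{B_{k_m}}\equiv\tfrac{24}{1-p}\pmod{p^m}$.

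\emph{The scalar congruence and the main obstacle.} This scalar congruence is the heart of the matter, and it is where the cases genuinely diverge. For $p\geq 5$ (part (iii)) it is clean: here $2\not\equiv 0\pmod{p-1}$, so $B_{k_m}/k_m$ is $p$-integral, and the Kummer congruence $(1-p^{k_m-1})\tfrac{B_{k_m}}{k_m}\equiv(1-p)\tfrac{B_2}{2}\pmod{p^m}$, valid since $k_m\equiv 2\pmod{(p-1)p^{m-1}}$, together with $p^{k_m-1}\equiv 0$ and $B_2/2=\tfrac1{12}$, yields $\tfrac{k_m}{B_{k_m}}\equiv\tfrac{12}{1-p}$, whence $\tfrac{p-1}{p^m-1}\cdot\tfrac{2k_m}{B_{k_m}}\equiv\tfrac{-24}{p^m-1}\equiv 24\pmod{p^m}$. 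The main obstacle is the small-prime cases $p\in\{2,3\}$ (parts (i) and (ii)): there the weight-$2$ class is degenerate, $2\equiv 0\pmod{p-1}$, so von Staudt--Clausen forces $\nu_p(B_{k_m})=-1$ and moreover $p\mid 24$, so the clean Kummer congruence is unavailable. The inflated weights $2+3\cdot 2^{m+1}$ and $2+4\cdot 3^m$ and the normalizing factor $\tfrac{p-1}{p^m-1}$ are tuned precisely to guarantee both $\varphi(p^m)\mid k_m-2$ and enough $p$-adic precision, and to absorb the singular Euler factor (the pole at the trivial character of the relevant $p$-adic $L$-function); establishing the scalar congruence there requires combining von Staudt--Clausen with the Kummer congruences for this exceptional weight class, and this delicate Bernoulli-number estimate is the crux of the argument.

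\emph{The level-one assertions.} Finally, for the ``moreover'' statements I would invoke Serre's theory of $p$-adic modular forms recalled above. Each right-hand side is a finite $\Z_p$-linear combination of $V_p$-iterates of the classical level-one form $E_{k_m}$, hence is a $p$-adic modular form; since it is congruent modulo $p^m$ to $E_2(z)$, Serre's theory exhibits $E_2(z)\pmod{p^m}$ as the reduction of a genuine modular form on $\SL_2(\Z)$ whose weight, well defined in the weight space, coincides with that of $E_2$ and is therefore realized in weight $k_m$.
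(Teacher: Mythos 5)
Your reduction of all three congruences to a single scalar Bernoulli congruence is correct: writing $k_m$ for the target weight and $A_{k_m}:=-2k_m/B_{k_m}$ for the Eisenstein coefficient normalization, the coefficient comparison (Euler's theorem for prime-to-$p$ divisors, the collapse $\sigma_{k_m-1}(p^j)\equiv 1\pmod{p^m}$, and the two regimes of the truncated geometric sum) shows that each stated identity holds if and only if
\[
A_{k_m}\equiv\frac{24}{p-1}\pmod{p^m}
\]
(indeed, comparing the coefficient of $q^1$ alone already forces this), and your Kummer-congruence argument settles it for $p\geq 5$, so part (iii) is complete. (You also correctly read the $V_{p^{m-1}}$ in part (iii) as a typo for $V_{p^{i-1}}$.) The genuine gap is that for $p\in\{2,3\}$ --- parts (i) and (ii), two thirds of the lemma --- you never prove this scalar congruence: you label it ``the crux'' and say it ``requires combining von Staudt--Clausen with the Kummer congruences for this exceptional weight class,'' but the classical Kummer congruences exclude precisely this class (they need $k\not\equiv 0\pmod{p-1}$, which fails for every even $k$ when $p\in\{2,3\}$), and von Staudt--Clausen pins down $B_{k_m}$ only modulo $\mathbb{Z}$, which one can check suffices only for small $m$ (for $p=2$, roughly $m\leq 4$). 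What is actually needed is a quantitative $2$-adic and $3$-adic Kummer-type congruence --- equivalently, the explicit modulus of continuity of the Kubota--Leopoldt zeta function at $p=2,3$ near its pole (Carlitz/Fresnel-type congruences, or Serre's construction via measures). Since the identity is \emph{equivalent} to this scalar congruence, and since the peculiar weights $2+3\cdot 2^{m+1}$ and $2+4\cdot 3^m$ exist precisely to encode it, naming the tools without carrying out the estimate leaves parts (i) and (ii) unproven.

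For comparison, the paper sidesteps this computation entirely: its proof observes that $U_p$ and $V_p$ act on Serre's $p$-adic modular forms (using $E_{(p-1)p^{m-1}}\equiv 1\pmod{p^m}$ to realize $U_p$ as a $p$-adic limit of the Hecke operators $T_p$), and then quotes the Example on p.~210 of Serre's \emph{Formes modulaires et fonctions z\^eta $p$-adiques}, from which these explicit approximations of $E_2$ --- including at $p=2,3$ --- follow. So your route is genuinely different and attractively self-contained where it works, but to close it you must either prove the $2$-adic and $3$-adic Bernoulli congruences or cite Serre, at which point the $p\geq 5$ computation is subsumed as well. A smaller soft spot: in your final paragraph, the conclusion that the approximating classical form ``is therefore realized in weight $k_m$'' needs the standard device of multiplying by powers of $E_{(p-1)p^{m-1}}\equiv 1\pmod{p^m}$ to move a congruent classical form into the exact weight claimed; as written this is an assertion rather than an argument.
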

\begin{proof}
	\noindent
	Let $g(z)=\sum_{n=0}^{\infty} b(n) q^n$ be a weight $k$ modular form on $\SL_2(\Z)$ with $p$-integral coefficients. Then $g(z) \,|\, T_p = \sum_{n=0}^{\infty} b(pn)q^n+p^{k-1}b(n) q^{pn}\in M_k$. Since $E_{p^{m-1}(p-1)}(z)\equiv 1\pmod{p^m}$, we have that $\{g(z)E_{p^{m-1}(p-1)}(z)\}$ converges to $g(z)$ $p$-adically. Hence, $g(z)$ is a $p$-adic modular form. Also, we have  the convergence 
$$g(z)E_{p^{m-1}(p-1)}(z) \,|\, T_p\longrightarrow g(z) \,|\, U_p=\sum_{n=0}^{\infty} b(pn)q^n.$$ 
Hence, $U_p$ is an operator on $M_k$ and so is $V_{p}$, defined as $g(z) \,|\, V_{p}=p^{1-k}(g(z) \,|\, T_p-g(z) \,|\,U_p)$. Now, our proof of the lemma follows from \cite[Example on Page 210]{Serre3}.	
\end{proof}

\subsection{Proof of Theorem \ref{Lacunarity}}
By Theorem \ref{ExplicitExpressionUa} and \ref{ExplicitExpressionUaStar}, we have 
\begin{align}\label{eq2}
\mathcal{U}_a(q)&= \sum_{t=0}^a w_t(a)
\sum_{\substack {\alpha, \beta, \gamma\geq 0\\ \alpha+2\beta+3\gamma=t}}
c(\alpha,\beta,\gamma) E_2(q)^{\alpha} E_4(q)^{\beta} E_6(q)^{\gamma}=F_0(q)+F_2(q)+\cdots+F_{2a}(q),\nonumber\\
\mathcal{U}_a^{\star}(q)&=\sum_{t=0}^a w_t^{\star}(a)\cdot \mathbb{E}_{2t}^{\star}(q)=F_0^{\star}(q)+F_2^{\star}(q)+\cdots+F_{2a}^{\star}(q),
\end{align}
where $F_{2i}(q)$ and $F_{2i}^{\star}(q)$, for $0\leq i\leq a$, are quasimodular forms of weight $2i$ on $\SL_2(\Z)$. Using Lemma \ref{lem2} and the Chinese Remainder Theorem, we find that $F_{2i}(q)$ and $F_{2i}^{\star}(q)$, for $0\leq i\leq a$, are modular forms modulo any integer $m$ on $\SL_2(\Z)$.
Employing Lemma~\ref{Serre_lemma} $(i)$ on \eqref{eq2}, we complete the proof of first and second parts of Theorem~\ref{Lacunarity}  and finally applying Lemma~\ref{Serre_lemma} $(ii)$ to \eqref{eq2}, claim $(iii)$ follows.

\subsection{Proof of Corollary~\ref{natural}}
	By Theorem \ref{ExplicitExpressionUa}, we find that
	\begin{align*}
	\mathcal{U}_2(q)&=\frac1{2^3}\sum_{n\geq0}[(-2n + 1)\sigma_1(n)+\sigma_3(n)]q^n, \\
	\mathcal{U}_3(q)&=\frac1{2^7\cdot 3\cdot 5}\sum_{n\geq0}[(40n^2 - 100n + 37)\sigma_1(n) + (-30n + 50)\sigma_3(n) + 3\sigma_5(n)]q^n, \\
	\mathcal{U}_4(q)&=\frac{1}{2^{10}3^3\cdot 5\cdot 7}\sum_{n\geq 0}\big[(-840n^3+5880n^2-9870n+3229)\sigma_1  \nonumber \\
	&\quad \quad \quad +(756n^2-4410n+4935)\sigma_3+(-126n+441)\sigma_5+5\sigma_7\big]q^n, \\
	\mathcal{U}_5(q)&=\frac1{2^{15}3^35^2\cdot 7}\sum_{n\geq0}[(3360n^4 - 50400n^3 + 223440n^2 - 314200n + 96111)\sigma_1(n) \\
	& \quad \quad \quad + (-3360n^3 + 45360n^2 - 167580n + 157100)\sigma_3(n) \\
	& \quad \quad \quad + (720n^2 - 7560n + 16758)\sigma_5(n) + (-50n + 300)\sigma_7(n) + \sigma_9(n)]q^n.
	\end{align*}
	Let $s$ and $t$ be non-negative integers. If $k$ is a positive odd, then for primes $p\equiv -1\pmod{s}$ we have
	\begin{align}\label{eq1}
	(pn)^t\sigma_k(pn)=(pn)^t\sigma_k(n)\sigma_k(p)=(pn)^t\sigma_k(n)(1+p^k)\equiv 0\pmod{s}
	\end{align}
	for all $n$ coprime to $p$. 
	Corollary~\ref{natural} follows by applying (\ref{eq1}) appropriately in each case.

\end{document}